\documentclass[11pt,letterpaper]{amsart}

\usepackage[centertags]{amsmath}
\usepackage{amsthm}
\usepackage{amssymb}
\usepackage{amscd}
\usepackage{eucal}
\usepackage{epsfig}
\usepackage{verbatim}
\usepackage{color}


\usepackage{latexsym,enumerate,amsxtra}





\newtheorem{thm}{Theorem}[subsection]
\newtheorem{theorem}[thm]{Theorem}
\newtheorem{corollary}[thm]{Corollary}
\newtheorem{lemma}[thm]{Lemma}
\newtheorem{proposition}[thm]{Proposition}
\newtheorem{defn}[thm]{Definition}

\newtheorem{remarks}[thm]{Remarks}
\newtheorem{remark}[thm]{Remark}
\newtheorem{example}[thm]{Example}

\newtheorem{numbering}[thm]{}




\newcommand{\nin}{\newline\indent{\ }}

\newcommand{\xyinj}[1][r]{\ar@{^(->}[#1]}

\def\XYmatrix{\xymatrix@M=5pt}



\newcommand{\CaC}{\mathcal C}

\newcommand{\CaO}{\mathcal O}
\newcommand{\CaH}{\mathcal H}

\newcommand{\CaL}{\mathcal L}
\newcommand{\CaR}{\mathcal R}

\newcommand{\CaW}{\mathcal W}

\newcommand{\CaV}{\mathcal V}


\newcommand{\frs}{{\mathfrak s}}
\newcommand{\wfrs}{{\widetilde{\mathfrak s}}}

\newcommand{\frB}{{\mathfrak B}}


\newcommand{\bbC}{\mathbb C}
\newcommand{\bbF}{\mathbb F}

\newcommand{\bbN}{\mathbb N}
\newcommand{\bbO}{\mathbb O}

\newcommand{\bbR}{\mathbb R}



\newcommand{\rT}{\mathrm T}


\newcommand{\sfK}{\mathsf K}



\newcommand{\orb}{\mathcal O}

\newcommand{\orbp}{\mathcal O^\prime}

\newcommand{\Tr}{\textup{Tr}}


\newcommand{\bG}{\mathsf G}
\newcommand{\bH}{\mathsf H}

\newcommand{\bGL}{\mathsf{GL}}

\newcommand{\G}{G}

\newcommand{\GL}{GL}
\newcommand{\rM}{M}

\newcommand{\wM}{\widehat\rM}
\newcommand{\wT}{\widehat\rT}

\newcommand{\wG}{\G^\vee}
\newcommand{\wGp}{\G^{\prime\vee}}
\newcommand{\walpha}{\widehat\alpha}
\newcommand{\Ldual}{{{}^L\!\Omega}}
\newcommand{\LdualG}{{{}^L\!\G}}
\newcommand{\LdualGp}{{{}^L\!\G'}}

\newcommand{\lieG}{\mathfrak g}
\newcommand{\lieH}{\mathfrak h}

\newcommand{\blieG}{\boldsymbol{\lieG}}
\newcommand{\blieH}{\boldsymbol{\lieH}}

\newcommand{\sms}{{s\!s}}

\newcommand{\gss}{\gamma}
\newcommand{\sgss}{\Gamma}
\newcommand{\refs}{s}

\newcommand{\dth}{{\mathrm d}}



\newcommand{\charpi}{\Theta_\pi}

\newcommand{\Ccs}{C_c^\infty}

\newcommand{\nil}{\mathcal N}

\newcommand{\Hom}{\textup{Hom}}
\newcommand{\Gal}{\textup{Gal}}


\newcommand{\irr}{\textsf{Irr}}
\newcommand{\irrG}{\textsf{Irr}(G)}
\newcommand{\irrGp}{\textsf{Irr}(G')}
\newcommand{\irrGn}{\textsf{Irr}(G_n)}
\newcommand{\irrGm}{\textsf{Irr}(G_m)}

\newcommand{\vol}{\textit{vol}}

\newcommand{\wtB}{\widetilde\frB}





\usepackage[top=2.7cm, bottom=2.7cm, left=2.7cm, right=2.7cm]{geometry}

\newcommand \fk[1]{{{\mathfrak #1}}}

\newcommand \wti[1]{{\widetilde {#1}}}

\newcommand\fg{\mathfrak g}

\newcommand\CG{{\mathcal G}}

\newcommand\CI{{\mathcal I}}
\newcommand\CO{{\mathcal O}}

\newcommand\CP{{\mathcal P}}

\newcommand\al{{\alpha}}

\newcommand\AZ{{\mathsf{AZ}}}
\newcommand\bfGL{{\mathsf{GL}}}

\newcommand\bfG{{\mathsf G}}

\newcommand\End{\operatorname{End}}

\newcommand\Ind{\operatorname{Ind}}

\newcommand\St{\mathsf{St}}

\newcommand\Irr{\mathsf{Irr}}
\newcommand\Rep{\mathsf{Rep}}

\newcommand\supp{\mathsf{supp}}

\newcommand\ind{\mathsf{ind}}

\newcommand\reg{\mathsf{reg}}

\newcommand\WF{\mathsf{WF}}

\newcommand\Ad{\operatorname{Ad}}

\def\<{\langle} 
\def\>{\rangle}

\numberwithin{equation}{section}

\begin{document}

\title[The ($\sgss$-asymptotic) Wavefront Sets: $\GL_n$]{The ($\sgss$-asymptotic) Wavefront Sets: $\GL_n$}

\author
{Dan Ciubotaru}
        \address[D. Ciubotaru]{Mathematical Institute, University of Oxford, Oxford OX2 6GG, UK}
        \email{dan.ciubotaru@maths.ox.ac.uk}

\author
{Ju-Lee Kim}
        \address[J.-L. Kim]{Department of Mathematics, M.I.T., Cambridge MA 02139, USA}
        \email{juleekim@mit.edu}


\begin{abstract} 
{Let $\G$ be a connected reductive $p$-adic group. As verified for unipotent representations, it is expected that there is a close relation between the (Harish-Chandra-Howe) wavefronts sets of irreducible smooth representations and their Langlands parameters in the local Langlands correspondence via the Lusztig-Spaltenstein duality and the Aubert-Zelevinsky duality. In this paper, we define the $\sgss$-asymptotic wavefront sets generalizing the notion of wavefront sets via the $\sgss$-asymptotic expansions (\cite{KM}), and then study the their relation with the Langlands parameters. When $G=GL_n$, it turns out that this reduces to the corresponding relation of unipotent representations of the appropriate twisted Levi subgroups via Hecke algebra isomorphisms. For unipotent representations of $GL_n$,  we also  describe the Harish-Chandra--Howe (HCH) local character expansions of irreducible smooth representations using Kazhdan-Lusztig theory, and give another computation of the coefficients in the HCH expansion and the wavefront sets.  
}
\end{abstract}

\date{\today}

\maketitle

\section{\bf Introduction}

Let $F$ be a nonarchimedean field with residue field $\mathbb F_q$. 
Let $W_F$ be the Weil group with inertia subgroup $I_F$ and wild inertia subgroup $P_F$. Let $W'_F$ be the Weil-Deligne group, i.e., $W'_F=W_F\ltimes \bbC$, where the action is defined by $w\cdot z=||w|| z$, $w\in W_F$, $z\in \bbC$ where $||~||$ is the norm on $W_F$.

Let $\bfG$ be a reductive $F$-group that splits over an unramified extension with its Lie algebra $\blieG$. 
For every $\omega\in H^1(F,\bfG)$, let $\G^\omega=\bfG^\omega(F)$ denote the corresponding pure inner form (in the sense of \cite{vogan-llc}). Let $G^\vee$ be the complex dual group to $\bfG$. Let ${}^L\!G=G^\vee\rtimes W_F$ be the $L$-group of $\G$.

\subsection{}{\bf Bernstein center.}\label{sec:1.1} Let $\CaR(\G^\omega)$ be the category of smooth representations of $\G^\omega$. Then $\CaR(\G^\omega)$ is the direct product of full abelian subcategories $\CaR^\frs(\G^\omega)$ parameterized by an inertia class $\frs$ of cuspidal data $(M,\sigma)$ which consists of a Levi subgroup $M$ and its supercuspidal representation $\sigma$ (see \cite{BD,BK} for details):
$\CaR(\G^\omega)=\prod_{\frs\in\frB(\G^\omega)}\CaR^\frs(\G^\omega)$ where $\frB(\G^\omega)$ is the set of inertia classes of $G^\omega$. 

Likewise, the set $\irr(\G^\omega)$ of irreducible smooth representations of $\G^\omega$ is a disjoint union of $\irr^\frs(\G^\omega)=\CaR^\frs(\G^\omega)\cap\irr(\G^\omega)$, $\frs\in\frB(\G^\omega)$: $\irr(\G^\omega)=\bigcup_{\frs\in\frB(\G^\omega)}\irr^\frs(\G^\omega)$.
Recall from \cite{KM} that when $p\gg0$, one can associate a semisimple element $\gss_\frs\in\lieG^\omega$ (up to rational conjugacy modulo compact elements) such that each $\pi\in\irr^\frs(\G^\omega)$ has a $\gss_\frs$-asymptotic expansion in the neighborhood of $1$ (see \cite{KM} and Definition \ref{def: endscopic wf}). Set $\blieG_\sms=\bigcup_{\omega}\lieG^\omega_\sms$ where $\lieG^\omega_\sms$ denotes the set of semisimple elements in $\lieG^\omega$. For $\gss\in\blieG_\sms/\!\!\sim$ (geometric conjugacy classes modulo compact set), let $\frB^\gss(\G^\omega)=\{\frs\in\frB(\G^\omega)\mid \gss_\frs\sim\gss\}$ where $\gss_\frs\sim\gss$ if $\gss_\frs$ and $\gss$ are rationally conjugate modulo compact elements in $\lieG^\omega$. Note that $\frB^\gss(\G^\omega)$ is finite and possibly empty. Let $\irr^\gss(\bfG)=\bigcup_\omega\bigcup_{\frs\in\frB^\gss(\G^\omega)}\irr^\frs(\G^\omega)$.
Then, we have
\begin{equation}\label{dec-gp}
\irr(\bfG)=\bigcup_{\gss\in\blieG_\sms/\sim}\irr^\gss(\bfG)
\end{equation}

Looking at the Galois side, let $\Ldual_\bfG$ be the set of Langlands parameters $\varphi:W'_F\rightarrow\LdualG$. We write $\varphi_\sms$ for $\varphi|W_F$ and $\mathbb O^\vee_\varphi$ for $G^\vee$-saturation of $\varphi(1_\bbC)$. Let ${}^L\!\frB_\bfG$ the set of stable inertia classes (see \cite{Ha} or \cite{AMS} for definition). Then, each $\wfrs\in{}^L\!\frB_\bfG$ parameterizes a subset $\Ldual^\wfrs_\bfG$ and 
$\Ldual_\bfG\,=\,\overset\circ\bigcup_{\wfrs\in{}^L\!\frB_\bfG}\,\Ldual^\wfrs_\bfG$, a disjoint union.

For a given semisimple parameter $\varphi_{\sms}:W_F\rightarrow\LdualG$ (\cite{FS, AMS}), one can associate a finite subset ${}^L\!\frB_\bfG^{\varphi_{\sms}}$ of ${}^L\!\frB_\bfG$ such that for any $\phi\in\bigcup_{\wfrs\in{}^L\!\frB_\bfG^{\varphi_{\sms}}}\Ldual^\wfrs_\bfG$, $\phi|W_F$ is $\G^\vee$-conjugate to $\varphi_{\sms}|I_F$.
Similarly, for $\varphi^+=\varphi_\sms|P_F=\varphi|P_F$, we can define $\frB^{\varphi^+}_\bfG$.
Then we have
\begin{equation}\label{dec-gal}
\Ldual_\bfG\,=\,\overset\circ\bigcup_{\varphi^+}\,\Ldual^{\varphi^+}_\bfG
\end{equation}

Since $\gamma$ (resp. $\varphi^+$) encodes the ramified part of a representation (resp. Langlands parameter), we conjecture that the local Langlands correspondence $\CaL:\irr(\bfG)\rightarrow\Ldual_\bfG$ is compatible with the decompositions in \eqref{dec-gp} and \eqref{dec-gal}. In a sense, we expect that for given $\gss\in\blieG_\sms$, there is an equivalence class of $\varphi^+$ such that $\CaL$ maps $\irr^\gss(\bfG)$ surjectively onto $\Ldual^{\varphi^+}_\bfG$, giving ``partitions" on $\irr(\bfG)$ and $\Ldual_\bfG$ compatible with the local Langlands correspondence.

Now considering the nilpotent part of Langlands parameters, recall from \cite{CMBO} that wavefront sets of representations and the nilpotent orbits $\bbO_\varphi\subset\nil^\vee$ are closely related via $\CaL$: when $\pi\in\irrG$ is a unipotent representation (in this case $\varphi_\sms=1$ and $\gss=0$), the geometric wavefront set of $\pi$ can be bounded by the Spaltenstein dual orbit of $\bbO^\vee_{\varphi_{\AZ(\pi)}}$.
Motivated by \cite{CMBO}, we will look at an interplay between $\gss$-asymptotic wavefront sets of irreducible representations (see Definition \ref{def: endscopic wf}) and the nilpotent part of Langlands parameters.

\subsection{}{\bf Wavefront sets and Kazhdan-Lusztig theory for $\GL_n$.}

To an irreducible smooth representation $\pi$, the Harish-Chandra-Howe (HCH) local character expansion describes the character $\Theta_\pi$  as a linear combination of Fourier transform of nilpotent orbital integrals $\widehat{\mu_\orb}$: there are a small $G$-domain $\CaV_0$ around 0 and $c_\orb(\pi)\in\bbC$, $\orb\in\orb(0)$ such that 
 \[
\charpi(\exp X)=\sum_{\orb\in \orb(0)}\, 
c_\orb(\pi)\, \widehat{\mu_\orb}(X),\qquad X\in\CaV_0\cap\lieG^\reg
\]
where $\orb(0)$ is the set of nilpotent orbits in $\lieG$. When $p\gg0$, one can take $\CaV_0=\lieG_{\dth(\pi)^+}$ where $\lieG_{\dth(\pi)^+}$ is a $\G$-domain of elements in $\lieG$ of depth larger than the depth $\dth(\pi)$ of $\pi$. Then, the wavefront set $\WF(\pi)$ of $\pi$ is the set of maximal elements in $\{\orb\in\orb(0)\mid c_\orb\ne0\}$, which is a fundamental invariant of $\pi$. 

For Iwahori spherical representations with real infinitesimal character, their geometric wavefront sets are computed in \cite{CMBO}. Those are the representations in $\irr^{\frs_0}(\G)$ where 
$\frs_0=(T,1)\in\frB(G)$ where $T$ is a maximal torus of $G$ and the type associated to $\frs_0$ is the trivial representation of an Iwahori subgroup $I$ of $\G$. To phrase the main result from {\it loc. cit.}, we need to invoke two known constructions. The first is Spaltenstein duality \cite{Spa}, a map \[d^\vee:\G^\vee\backslash\nil^\vee\rightarrow \bG\backslash\nil\] between the sets of nilpotent orbits in $\mathfrak g^\vee$ and in $\mathfrak g$ whose image is the set of special nilpotent orbits and $(d^\vee)^3=d^\vee$. The second is the involution $\AZ$ on the set of irreducible smooth representations, first defined by Zelevinsky \cite{Zel} for $GL_n(F)$ and generalized by Aubert \cite{Au-inv} to arbitrary groups $\G$. If $\pi$ is an irreducible Iwahori spherical representation, or more generally, a unipotent representation in the sense of Lusztig, with real infinitesimal character, then it is shown in \cite{CMBO} that
 \[{}^{\bar F}\WF(\AZ(\pi))=d^\vee(\bbO_\pi^\vee),\]
  where $\mathbb O_\pi^\vee$ is the $G^\vee$-saturation of the nilpotent Langlands parameter of $\pi$.  The proof of this formula relies on the Barbasch-Moy \cite{BM-local} test functions and Okada's \cite{Ok} refinement, together with an analysis of the maximal parahoric restrictions of representations, via Hecke algebra isomorphisms and difficult case by case calculations of the restrictions of Springer representations.
  
In general, the upper bound conjecture (\cite[\S1]{CK}, \cite{HLLS}) proposes that 
\[{}^{\bar F}\WF(\AZ(\pi))\le d^\vee(\bbO_\pi^\vee),\]
always holds (the inequality is with respect to the closure ordering).

For $\GL_n$, the upper bound conjecture is valid, in fact a stronger statement holds. 
\begin{theorem}\cite[II.2]{MW}\label{thm: MW-intro} For every irreducible $\mathsf{GL}_n(F)$-representation $\pi$, with the same notation as above,
\begin{equation}\label{e:gl-intro}
    {}^{\bar F}\WF(\AZ(\pi))=d^\vee(\bbO_\pi^\vee).
    \end{equation}
\end{theorem}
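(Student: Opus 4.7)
The plan is to combine the Zelevinsky--Langlands classification of $\Irr(\GL_n(F))$ with Moeglin--Waldspurger's explicit wavefront formula in terms of multisegments, and to check that the resulting combinatorial identity matches partition transpose.

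First, I would identify both sides explicitly. By the local Langlands correspondence for $\GL_n(F)$ combined with Zelevinsky's classification, every $\pi\in\Irr(\GL_n(F))$ is the Langlands quotient $L(\mathfrak m)$ of a standard module for a multisegment $\mathfrak m = \{\Delta_1,\ldots,\Delta_r\}$, with $\Delta_i = [\sigma_i,\sigma_i\nu,\ldots,\sigma_i\nu^{a_i-1}]$ and $\sigma_i$ supercuspidal on $\GL_{d_i}(F)$. The monodromy of the Langlands parameter then has Jordan type $\lambda(\pi) = \bigsqcup_{i=1}^r(a_i^{d_i})$; since Spaltenstein duality on $\GL_n$ is the ordinary partition transpose, $d^\vee(\bbO^\vee_\pi)$ has partition $\lambda(\pi)^t$. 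As $\AZ$ interchanges the Langlands and Zelevinsky classifications, $\AZ(L(\mathfrak m)) = Z(\mathfrak m)$, and the claim becomes the identity $\WF(Z(\mathfrak m)) = \lambda(L(\mathfrak m))^t$ of partitions of $n$.

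Second, I would reduce to the single-line case. If $\mathfrak m = \mathfrak m_1\sqcup\mathfrak m_2$ with the two pieces supported on disjoint, unlinked cuspidal lines, then $Z(\mathfrak m) = Z(\mathfrak m_1)\times Z(\mathfrak m_2)$ is an irreducible parabolic induction from a Levi of $\GL_n$. Wavefront sets on $\GL_n$ behave as Lusztig--Spaltenstein induction under such unlinked parabolic induction, which for $\GL$ amounts to row-wise addition of partitions. Langlands parameters add as direct sums, so their Jordan types concatenate and hence their transposes add row-wise in the same fashion. The identity therefore reduces to the case in which all $\sigma_i$ lie on a single cuspidal line $\{\sigma_0\nu^c:c\in\bbZ\}$ over a fixed supercuspidal $\sigma_0$ of $\GL_d(F)$.

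Third, in this single-line case, I would invoke Moeglin--Waldspurger's explicit algorithm (II.2 of \emph{loc.~cit.}): an induction on $|\mathfrak m|$ via Bernstein--Zelevinsky derivatives and generalized Whittaker models shows that $\WF(Z(\mathfrak m))$ equals the transpose of the partition with parts $a_i$, each taken $d=\dim\sigma_0$ times, i.e., $\lambda(L(\mathfrak m))^t$. The main obstacle is precisely this single-line computation, the technical heart of Moeglin--Waldspurger: it rests on Rodier's identification of the top coefficient in the HCH expansion with the existence of a Whittaker model for the regular orbit, its extension to degenerate Whittaker models computing the lower strata via Bernstein--Zelevinsky derivatives, and a combinatorial matching between the Schützenberger-type action of $\AZ$ on multisegments and the transposition of Jordan types on the Galois side.
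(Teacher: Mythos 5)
Your proposal is correct, but it takes a genuinely different route from the one in the paper. What you outline is essentially the original Moeglin--Waldspurger argument: after observing that Spaltenstein duality on $\GL_n$ is partition transpose and that $\AZ$ interchanges the Langlands and Zelevinsky classifications, you reduce to a single cuspidal line via row-wise additivity of induced orbits and concatenation of Jordan blocks (using $(\lambda\cup\mu)^t = \lambda^t+\mu^t$), and then invoke the Bernstein--Zelevinsky derivative / degenerate Whittaker model computation of \cite{MW} for that case. The paper instead cites \cite{MW} for the reduction to the Iwahori-spherical component and then gives an entirely character-theoretic proof of that case: it uses Howe's identity $\widehat\mu_{\CO_\lambda}(X)=\Theta_{\Ind_{P(\lambda)}^G(\mathbf 1)}(1+X)$ on $\fg_{0+}$, writes $\Theta_{\AZ(\pi(\al;\nu))}$ as a $\bbZ$-linear combination of the $\Theta_{\Ind_{P(\mu)}^G(\mathbf 1)}$ with coefficients given by Kazhdan--Lusztig multiplicities via the triangularity \eqref{mult-geom}, and concludes by linear independence of these induced characters on $G_{0^+}$. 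The two approaches buy different things: yours tracks generalized Whittaker models and is the one that actually yields the statement for arbitrary cuspidal support; the paper's produces, in addition to the leading term, an explicit formula $c_{\CO_\lambda}(\AZ(\pi(\al;\nu)))=\wti m(\al,\lambda^t;\nu)$ for every coefficient in the HCH expansion in terms of Kazhdan--Lusztig data, and it makes visible why $\AZ$ appears, since $\AZ$ swaps $\St$ and $\mathbf 1$ and commutes with induction. Both proofs ultimately lean on \cite{MW} at some point (you for the single-line analysis, the paper for the independence from cuspidal support), so neither is fully self-contained.

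One small point to be careful about in your step two: the claim that the wavefront set of an irreducible unlinked parabolic induction equals the Lusztig--Spaltenstein induced orbit is itself a nontrivial ingredient. For $\GL_n$ this is available, but it is part of the same circle of results in \cite{MW} rather than an independent fact, so it should be attributed there rather than treated as a black box; otherwise your reduction risks reading as circular.
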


In \S2, we describe HCH-expansion using Kazhdan-Lusztig theory and compute both sides of \ref{e:gl-intro}, thus give another proof of Theorem \ref{thm: MW-intro}.

\subsection{}{\bf $\sgss$-asymptotic wavefront sets and Langlands parameters}

While any smooth representations have HCH expansions, $\sgss$-asymptotic expansions are more adapted to each Bernstein component. 
Let $\gss\in\lieG^\omega_\sms$ and $\orb(\gss)$ the set of $\G$-orbits in $\lieG$ whose closures contain $\gss$. For $\pi\in\irr(\G^\omega)$, if there exist complex numbers $c_{\orb}(\pi)$, $\orb\in \orb(\gss)$ and a small neighborhood $\CaV$ of $0$ such that
\begin{equation}\label{eq: gss-expansaion}
\charpi(\exp X)=\sum_{\orb\in \orb(\gss)}\, 
c_{\orb}(\pi)\, \widehat{\mu_{\orb}}(X),\qquad X\in\lieG_{(\frac{\dth(\pi)}2)^+}\cap\lieG^\reg
\end{equation}
we say that $\Theta_\pi$ is $\sgss$-{\it asymptotic with respect to }$\gss$ or $\gss$-{\it asymptotic} on $\CaV$.\footnote{While {\it $\Gamma$-asymptotic expansion} is a general term to refer to an expression in \ref{eq: gss-expansaion}, {\it $\gss$-asymptotic expansion} is a specialization to $\Gamma=\gss$.} Similarly as in the HCH case, one can define the $\gss$-asymptotic wave front set $\WF(\pi,\gss)$ (see Definition \ref{def: endscopic wf}). 

When $p\gg0$, for $\pi\in\irr^\frs(\G^\omega)$, it is shown in \cite{KM} that there is $\gss_\frs$ such that $\Theta_\pi$ is $\gss_\frs$-asymptotic on $\lieG_{(\frac{\dth(\pi)}2)^+}$.
In fact, $\gss_\frs$ depends only on $\frs$. We will also write $\gss_\pi$ for $\gss_\frs$ when appropriate.

For example, for a depth zero representation, $\gss_\pi$ is equivalent to $0$ and thus the $\gss_\pi$-asymptotic expansion of a depth zero representation reduces to the HCH expansion. For positive depth representations, $\gss_\pi$ is nontrivial. Note that $\gss_\pi$-expansions are valid on a larger $\G$-domain in general than the HCH expansion. Moreover, since $\gss_\pi$ encodes the structure of $\varphi_\pi|_{P_F}$ (and vice versa \cite{Kal, Kal2}) and $\bbO_{\varphi_\pi}$ is the orbit of $\varphi_\pi(1_\bbC)\in C_{\lieG^\vee}(\varphi_\pi^+)$, 
one can expect that $\WF(\pi,\gss_\pi)$ is more closely related to $\varphi_\pi$. 

When $\G=\GL_n$, $\gss_\pi$ can be refined to another semisimple element $\refs_\pi$ with $C_\G(\gss_\pi)\supseteq C_\G(\refs_\pi)$ so that $\pi$ is $\refs_\pi$-asympototic on $\lieG_{(\frac{\dth(\pi)}2)^+}$ and  Theorem \ref{thm: Howe-Moy} holds for $\G'=C_\G(\refs_\pi)$ (Remark \ref{rmk: refined ss}). Then, there corresponds an irreducible representation $\pi'$ of $\G'$ via the Hecke algebra isomorphism in Theorem \ref{thm: Howe-Moy}. For example, when $\pi$ is supercuspidal, the refined $\refs_\pi$ is elliptic regular semisimple, the associated Hecke algebra is commutative, and the corresponding $\pi'$ is a character of an anisotropic torus. 
The relation between $\refs_\pi$-asymptotic wavefront sets and nilpotent parts of Langlands parameters can be more explicitly formulated when $\bG=\mathsf{GL}_n$ (see Theorem \ref{thm: Main2}).

In the following, if $\CaL(\pi)=\varphi_\pi$ (resp. $\CaL(\pi')=\varphi_{\pi'}$), we write $\bbO_\pi^\vee$ (resp. $\bbO_{\pi'}^{'\vee}$) for 
the $\G^\vee$-saturation (resp. $\G^{'\vee}$-saturation) of $\varphi_\pi(1_\bbC)$ (resp. $\varphi_{\pi'}(1_\bbC)$).

\begin{thm}\label{thm: Main2} Suppose $\pi\in\irr(GL_n)$ contains a pure type (see Definition \ref{def: pure type}). Let $\refs_\pi\in\lieG^\sms$ be associated to the pure type and $G'=C_G(\refs_\pi)$. Let $\pi'\in\irrGp$ be the corresponding unipotent representation of $G'$. Then,
\[
\WF(\pi,\refs_\pi)= d'(\mathbb O^{'\vee}_{\AZ(\pi')}).
\]
\end{thm}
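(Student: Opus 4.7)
The plan is to reduce the statement to Theorem~\ref{thm: MW-intro} applied to the twisted Levi $G' = C_G(\refs_\pi)$. Under the pure type hypothesis in the $\GL_n$ setting, $G'$ is a product $\prod_i \GL_{m_i}(F_i)$ over unramified extensions $F_i/F$, so Theorem~\ref{thm: MW-intro} applies directly to $\pi'$ on $G'$. The whole argument then consists of transferring the relevant data---asymptotic character expansion, wavefront sets, Aubert--Zelevinsky duality, and local Langlands parameters---between $\pi$ and $\pi'$ via the Hecke algebra isomorphism attached to the pure type (Theorem~\ref{thm: Howe-Moy}).

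First I would invoke Theorem~\ref{thm: Howe-Moy} to match $\pi \in \irr^\frs(G)$ with the unipotent $\pi' \in \irr(G')$ through an equivalence between $\CaR^\frs(G)$ and the unipotent Bernstein block of $G'$. Under this equivalence, Aubert--Zelevinsky duality is compatible with the correspondence (it is expressed via parabolic induction/restriction functors that the Hecke algebra isomorphism intertwines), so $\AZ(\pi) \leftrightarrow \AZ(\pi')$; and the local Langlands correspondence is compatible in the sense that $\mathbb O^\vee_\pi$, viewed inside $C_{G^\vee}(\varphi_\pi^+) \simeq G^{\prime\vee}$, coincides with $\mathbb O^{\prime\vee}_{\pi'}$. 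For $\GL_n$, this follows from the type-theoretic description of the LLC (essentially Bushnell--Henniart combined with Sécherre--Stevens).

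Next, by \cite{KM}, $\Theta_\pi$ admits a $\refs_\pi$-asymptotic expansion on $\lieG_{(\dth(\pi)/2)^+}$, whose nilpotent $G$-orbits of the form $G \cdot (\refs_\pi + n')$, with $n' \in \nil'$, are parameterized by nilpotent $G'$-orbits via the Jordan decomposition. The key analytic step is to show that the coefficient of $\widehat{\mu}_{G\cdot(\refs_\pi+n')}$ in the $\refs_\pi$-asymptotic expansion of $\Theta_\pi$ equals, up to a common nonzero constant independent of $n'$, the HCH coefficient of $\widehat{\mu}_{n'}$ in the local character expansion of $\Theta_{\pi'}$ on $\lieG'$. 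This is essentially Harish-Chandra descent around $\refs_\pi$ combined with the Howe--Moy/Bushnell--Kutzko transfer of characters, and yields the identity
\[
\WF(\pi,\refs_\pi) \;=\; \WF(\pi')
\]
as subsets of $G' \backslash \nil'$. Applying Theorem~\ref{thm: MW-intro} to $\pi'$ on $G'$ and using $\AZ^2 = \mathrm{Id}$ gives $\WF(\pi') = d'(\mathbb O^{\prime\vee}_{\AZ(\pi')})$, and the statement follows.

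The main obstacle will be the precise transfer of character-expansion coefficients---not merely of their supports---through the Hecke algebra isomorphism. The compatibility of $\AZ$ with Hecke algebra isomorphisms is categorical and the type-theoretic compatibility of the LLC for $\GL_n$ is well documented, but one must still justify that the Hecke algebra isomorphism intertwines the relevant invariant distributions in the needed way. Concretely, the plan is to express $\Theta_\pi$ near $\refs_\pi$ via matrix coefficients of the pure type, apply Harish-Chandra descent to pass to invariant distributions on $\lieG'$ supported in $\nil'$, and verify that the proportionality constant between $c_{G\cdot(\refs_\pi+n')}(\pi)$ and $c_{n'}(\pi')$ is independent of $n'$; this independence is precisely what is needed for the sets of maximal orbits on both sides to match.
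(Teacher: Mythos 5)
Your plan matches the paper's proof in structure: reduce to the wavefront-set formula for the unipotent representation $\pi'$ of the twisted Levi $G'$, and carry the conclusion back to $\pi$ through the Hecke algebra isomorphism of Theorem~\ref{thm: Howe-Moy}. The two inputs you propose to establish yourself are, in the paper, outsourced to existing results, and this is worth flagging because both are nontrivial. (i) Your ``key analytic step'' --- that the $\refs_\pi$-asymptotic coefficients $c_{G\cdot(\refs_\pi+n')}(\pi)$ are proportional (with a constant independent of $n'$) to the HCH coefficients $c_{n'}(\pi')$ --- is precisely Murnaghan's theorem (Theorem~\ref{thm: Murnaghan}, from \cite{Mur,KM}), which gives $c_\orb(\pi)=\frac{\vol_{G'}(J')}{\vol_G(J)}\dim(\varrho)\,c_{\orbp}(\pi')$. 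Your proposed route via Harish-Chandra descent around $\refs_\pi$ is in the spirit of how that theorem is proved, but reproving it is a substantial undertaking and the paper simply cites it. (ii) The compatibility of the Aubert--Zelevinsky involution with the Hecke algebra isomorphism is Procter's theorem (Theorem~\ref{thm: Procter}, \cite{Pr}); calling it ``categorical'' undersells the work required, since one must verify that $\iota$ intertwines the parabolic induction/restriction functors on both sides, which is exactly the content of Procter's paper. The remaining ingredients --- Theorem~\ref{thm: MW-intro} (resp. \cite{CMBO}) applied to $\pi'$ on $G'$, and the transport of the Langlands parameter through the $L$-embedding $\LdualGp\hookrightarrow\LdualG$ --- are handled in the paper by the commutative diagram of Theorem~\ref{thm: commutative diagram}, and your account of them is essentially the same. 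One small inaccuracy: under the pure-type hypothesis $u=1$, so $G'=\GL_{n/m}(E)$ is a \emph{single} general linear group over an extension $E/F$ which need not be unramified; this does not affect the argument since Theorem~\ref{thm: MW-intro} holds for $\GL$ over any nonarchimedean local field, but the phrasing ``product over unramified extensions'' describes the general (non-pure) case.
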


Here, $\AZ(\pi')$ is the Aubert-Zelevinsky dual of $\pi'$, $\mathbb O^{'\vee}_{\AZ(\pi')}$ is the $G^{'\vee}$-saturation of the nilpotent part of the Langlands parameter of $\AZ(\pi')$ in $\lieG^{'\vee}$ and $d':\G^{'\vee}\backslash\nil^{'\vee}\rightarrow\bG'\backslash\nil'$ is the Spaltenstein dual map (see \S4).

\

 In \S2, we use Kazhdan--Lusztig theory for $GL_n$ to compute wavefront sets of irreducible representations and thus to give another proof of Theorem \ref{thm: MW-intro} in the case of unipotent representations. In the process, we also find explicit formulae for all the coefficients in the HCH expansion in terms of Kazhdan-Lusztig multiplicities.
 
In \S3 and \S4, we review necessary backgrounds to formulate and prove Theorem \ref{thm: Main2}. In \S3, we review the Bernstein--Zelevinsky classification and its relation to Langlands parameters. In \S4, we discuss a rationality question regarding the Lusztig-Spaltenstein duality. This step is necessary to analyze the duality in twisted Levi subgroups of $\G$. Lastly, \S5 focuses on proving Theorem \ref{thm: Main2}.

\

\noindent
{\sc  More Notation and Conventions.} We keep the notation from the above. We use bold face letters $\bG$, $\bH$ etc denote reductive groups over $F$, and corresponding roman letters the group of $F$-rational points. In particular, let $\bG_n$ denote $\bGL_n$. Similarly, $\blieG$, $\blieH$, $\lieG$, $\lieH$ etc denote the Lie algebras. By $X_{\sms}$, we denote the subset of semisimple elements in $X\subset\lieG$. 

We let $\CG$ to denote a reductive group defined over an algebraically closed field. We will still use $\lieG$ to denote its Lie algebra if there is no confusion.

\

\ 

\section{\bf Wavefront Sets: $\bfGL_n$}\label{sec: KL} 

\subsection{}\label{subsec: Spaltenstein} {\bf Spaltenstein duality.}
Let $\mathfrak g$ and $\mathfrak g^\vee$ be two reductive Lie algebras over an algebraically closed field of characteristic $0$ with root systems dual to each other. 
Let $\CG$, $\CG^\vee$ be two connected reductive groups with Lie algebras $\mathfrak g$ and $\mathfrak g^\vee$, respectively. 
Let $\mathcal N$ and $\mathcal N^\vee$ be the nilpotent cones, the varieties of $ad$-nilpotent elements in $\mathfrak g$ and  $\mathfrak g^\vee$, respectively. 
The group $\CG$ acts on $\mathcal N$ by the adjoint action and there finitely many orbits whose classification is independent of the isogeny of $\CG$ (same for $\CG^\vee$).  Spaltenstein duality \cite{Spa} is an assignment

\begin{equation}
    d: \CG^\vee\backslash \mathcal N^\vee\longrightarrow \CG\backslash \mathcal N
\end{equation}
which is not injective or surjective in general, but whose image consists of the {\it special orbits} in the sense of Lusztig. By abuse of notation, denote by $d$ also the Spaltenstein duality in the opposite direction. When restricted to the set of special orbits on both sides, $d$ is a duality, in the sense that it is bijection and $d^2=\text{id}$. On the full set of orbits, it is still true that $d^3=d$.

\begin{example}\rm 
If $\mathfrak g=\mathfrak{gl}_n$, the nilpotent orbits in $\lieG$ and $\lieG^\vee$ are parameterized by partitions $\lambda$ of $n$ via the Jordan normal form and $d(\lambda)=\lambda^t$, where $\lambda^t$ is the transpose partition.
\end{example}

\subsection{} {\bf Upper bound conjecture}
For $\GL_n=\bfGL_n(F)$, the upper bound conjecture \cite[\S1.5]{CK} is true. In fact, a stronger statement holds. 
\begin{theorem}[{M\oe glin-Waldspurger \cite[II.2]{MW}}] \label{thm: MW} For every irreducible $\GL_n$-representation $\pi$,
\begin{equation}\label{e:gl}
    {}^{\bar F}\WF(\AZ(\pi))=d^\vee(\mathbb O^\vee_\pi)
    \end{equation}
\end{theorem}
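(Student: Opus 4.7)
The plan is to split the theorem into the unipotent case (handled directly by Kazhdan--Lusztig theory in \S\ref{sec: KL}) and the reduction of the general case to the unipotent case on a suitable Levi, supplied by Bushnell--Kutzko type theory and the Hecke algebra isomorphism of Theorem \ref{thm: Howe-Moy}. The reduction is the $\bfGL_n$-variant of Theorem \ref{thm: Main2}, proved in \S5, and it is compatible with \eqref{e:gl}: both $d^\vee(\mathbb O^\vee_\pi)$ and the HCH wavefront set of $\AZ(\pi)$ decompose block-by-block along the Bernstein decomposition, so it suffices to treat a unipotent representation of some $\bfGL_m(F)$.

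For the unipotent case, I would apply Borel's equivalence to pass to simple modules of the Iwahori--Hecke algebra $\mathcal H(\GL_m/\!/I)$, and then use Kazhdan--Lusztig's geometric classification: after fixing the infinitesimal character, simple modules are labelled by $\GL_m(\bbC)$-orbits of pairs $(s,e)$ with $\Ad(s)e = qe$; the $\GL_m^\vee$-orbit of $e$ is $\mathbb O^\vee_\pi$, corresponding to a partition $\lambda$ of $m$. Under Borel's equivalence, the Aubert--Zelevinsky involution $\AZ$ corresponds to the Iwahori--Matsumoto involution on $\mathcal H$; a combinatorial calculation in type $\widetilde A_{m-1}$ (two-sided cells are indexed by partitions via Robinson--Schensted, and the involution reverses the cell order) then identifies the Langlands partition of $\AZ(\pi)$ as $\lambda^t$, i.e., $\mathbb O^\vee_{\AZ(\pi)}$ has partition $\lambda^t$.

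Next, I would compute the HCH coefficients $c_\orb(\AZ(\pi))$ via a test-function argument in the spirit of Barbasch--Moy and Okada: evaluating $\Theta_{\AZ(\pi)}$ on Kazhdan--Lusztig basis elements of $\mathcal H$ expresses these coefficients in terms of Kazhdan--Lusztig multiplicities specialised at $q=1$. Because all component groups of nilpotent centralizers in $\GL_m^\vee$ are trivial and Spaltenstein duality in type $A$ is the transpose of partitions, this calculation identifies the maximal orbit carrying a nonzero HCH coefficient as the one with partition $\lambda^t$. Thus $\WF(\AZ(\pi)) = \lambda^t = d^\vee(\mathbb O^\vee_\pi)$, proving \eqref{e:gl} for unipotent $\pi$.

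The main obstacle is precisely this last identification: showing that the maximal nilpotent orbit supporting a nonzero HCH coefficient of $\AZ(\pi)$ is $\lambda^t$ (and not some strictly larger orbit). This requires matching the two-sided cell filtration of $\mathcal H$ under its Kazhdan--Lusztig basis with the depth filtration on $\lieG$ which governs the HCH expansion, and carefully controlling values of KL polynomials at $q=1$. The multiplicity-freeness inside each two-sided cell in type $A$, together with the absence of non-singleton special pieces, makes this matching feasible, but the combinatorial bookkeeping of leading coefficients is where the real work of the new proof lies.
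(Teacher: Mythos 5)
Your proposal has the right building blocks (Kazhdan--Lusztig geometric classification, the fact that $\AZ$ corresponds to the Iwahori--Matsumoto involution, Spaltenstein duality as partition transpose in type $A$), and the reduction to the Iwahori block is indeed the structure of the argument. But it misses the single step that makes the paper's proof close, and it contains a claim that is not correct as stated.

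The gap you concede at the end --- showing that the maximal nilpotent orbit carrying a nonzero HCH coefficient of $\AZ(\pi)$ is exactly $\lambda^t$, via a matching of two-sided cell filtrations with depth filtrations and bookkeeping of KL leading coefficients --- is not filled by the paper either; it is avoided. The paper uses Howe's theorem (\cite{Ho}, Proposition 5) together with the homogeneity results of DeBacker and Waldspurger, which for $\GL_n$ identify each distribution $\widehat\mu_{\CO_\lambda}$ on $\fg_{0+}$ with the character $\Theta_{\Ind_{P(\lambda)}^G(\mathbf 1)}$ of an (unnormalized) induced-from-trivial representation. Once you know this, the HCH expansion is literally an expansion in the characters $\Theta_{\Ind_{P(\mu)}^G(\mathbf 1)}$; and since $\AZ$ swaps $\St$ and $\mathbf 1$ and commutes with parabolic induction, these are exactly the $\AZ$-duals of the tempered standard characters $\Theta_{\CI(\mu,0)}$ appearing in the Langlands character formula for $\pi(\al;\nu)$. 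Applying $\AZ$ to that character formula, and using only the KL support condition (\ref{mult-geom}) --- triangularity in the closure order, with leading term the open orbit --- immediately gives (\ref{IM-char-gl}), and linear independence of the distributions $\widehat\mu_\orb$ on $G_{0+,\sms}^\reg$ finishes the identification. There is no evaluation on test functions or KL basis elements, no control of KL polynomials at $q=1$, and no cell combinatorics required; the triangularity of the KL character identity, seen through the $\AZ$-dualized lens of Howe's theorem, carries all the weight. Without invoking Howe's theorem you are missing the bridge from the representation-theoretic side to the $\widehat\mu_\orb$ side, and the ``real work'' you anticipate in matching cell filtrations to depth filtrations is exactly the work the paper replaces by this bridge.

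Separately, the claim that ``the Langlands partition of $\AZ(\pi)$ is $\lambda^t$, i.e.\ $\mathbb O^\vee_{\AZ(\pi)}$ has partition $\lambda^t$'' is not what is needed and is not true in general. The theorem computes $\WF(\AZ(\pi))$, a nilpotent orbit in $\fg$, not $\bbO^\vee_{\AZ(\pi)}$, a nilpotent orbit in $\fg^\vee$; the Zelevinsky involution does not act on multisegments simply by transposing the underlying partition of the nilpotent part. (For instance, for tempered Iwahori-spherical $\pi$ with several nontrivial segments, $\bbO^\vee_{\AZ(\pi)}$ is governed by the Mœglin--Waldspurger algorithm on multisegments and depends on $\nu$, not only on $\overline\al$.) What the paper shows instead is that $\WF(\AZ(\pi))=\CO_{\overline\al^t}$ directly from the $\AZ$-dualized character identity, and then uses $\bbO^\vee_{\pi}=\CO^\vee_{\overline\al}$ and $d^\vee(\CO^\vee_{\overline\al})=\CO_{\overline\al^t}$, without any need to know $\bbO^\vee_{\AZ(\pi)}$.
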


The calculation in \cite{MW} shows that the wavefront set of a representation of $\GL_n$ is independent of the supercuspidal support (which is always generic for $\GL_n$). In other words, for $\GL_n$, the calculation of the wavefront set for an irreducible representation in an arbitrary Bernstein component is not essentially different than that in the Iwahori component. 

For this reason, we now give a different proof of (\ref{e:gl}) in the Iwahori-spherical case which helps to explain the appearance of the duality $\AZ$ in the formulae for the wavefront set. The ``philosophy" is that, at least for $\GL_n$, in some precise sense, the local character expansion is the $\AZ$-dual of the character identity expressing an irreducible representations in terms of standard Langlands representations.

\subsection{}
In the rest of \S\ref{sec: KL}, let $\G=\GL_n$. Via the Jordan normal form, the set of nilpotent orbits in $\fg$ is in a one-to-one correspondence $\CO_\lambda\leftrightarrow \lambda$ with the set of partitions $\mathcal P(n)$ of $n$. By a partition $\lambda\in \mathcal P(n)$ we understand a collection of positive integers $\lambda=(\lambda_1,\dots,\lambda_\ell)$ such that $\lambda_1\ge \lambda_2 \ge\dots\ge \lambda_\ell$ and $\sum_i \lambda_i=n$. If $\lambda\in \mathcal P(n)$, let us denote by $e_\lambda=e_{\CO_\lambda}$ the representative given by the upper-triangular nilpotent Jordan blocks with sizes given by the parts $\lambda_j$ of $\lambda=(\lambda_1,\dots,\lambda_\ell)$. The closure ordering on nilpotent orbits is the same as the dominance order on partitions:
\[\CO_\lambda\subset \overline \CO_\mu \text{ iff } \lambda\le \mu\text{ iff } \sum_{1\le i\le t} \lambda_i\le \sum_{1\le i\le t} \mu_i,\text{ for all }t.
\]

Write $(\GL_m)^k_\Delta$ for the group $\GL_m$ embedded diagonally in the direct product of $k$ copies of $\GL_m$. For every $i\ge 1$, denote $r_i=|\{j\mid \lambda_j=i\}|$, so that, in particular, $\sum_{i\ge 1} i r_i=n$. The centralizer of $e_\lambda$ in $G$ is
\begin{equation}
Z_\lambda= Z_\lambda^{\mathsf{red}} U_\lambda,\text{ where }Z_\lambda^{\mathsf{red}}=\prod_{i\ge 1} (\GL_{r_i})^i_\Delta\cong \prod_{i\ge 1} \GL_{r_i},
\end{equation} 
and $U_\lambda$ is a unipotent group.

In the case of $G$, the distributions $\widehat \mu_{\CO_\lambda}$ in the local character expansion, $\lambda\in \mathcal P(n)$, admit a very concrete interpretation \cite{Ho}. Define a block upper-triangular parabolic subgroup $P(\lambda)$ as follows. Let $V=F^n$ be the standard representation of $G$ and for every $j\le 1$, set $V_j=\ker(e_\lambda^j)$. This defines a partial flag $(0=V_0\subseteq V_1\subseteq V_2\subseteq\dots)$ of $V$, and let $P(\lambda)$ be the stabilizer of this flag, $P(\lambda)=\{x\in G\mid x\cdot V_j\subseteq V_j,\ j\ge 1\}$. The corresponding Lie algebra decomposes $\fk p(\lambda)=\fk m(\lambda)\oplus \fk n(\lambda)$, where $\fk m(\lambda)$ is a block-diagonal Levi subalgebra and $\fk n(\lambda)$ is the nilpotent radical. By definition, $e_\lambda\in \fk n(\lambda)$ and the $P(\lambda)$-orbit of $e_\lambda$ in $\fk n(\lambda)$ is open dense. Moreover, 
\begin{equation}
\fk m(\lambda)\cong \bigoplus_{j\ge 1} \mathfrak{gl}_{\lambda_j'}, \text{ where }\lambda^t=(\lambda_1',\lambda_2',\dots)
\end{equation}
is the transpose partition to $\lambda$. Note that
$P(\lambda)=P_{\lambda^t}$ and $M(\lambda)=M_{\lambda^t}$

Combining \cite[Proposition 5]{Ho} and homogeneity results in \cite{Wa, De},
\begin{equation}
\widehat\mu_{\CO_\lambda}(X)=\Theta_{\Ind_{P(\lambda)}^G(\mathbf 1)}(1+X),\text{ for all } X\in \fg_{0+},
\end{equation}
where $\Ind_P^G$ denotes the functor of (unnormalized) parabolic induction and $\mathbf 1$ is the trivial representation. Consequently, the $0$-asymptotic character expansion can be written in this case as the identity of distributions
\begin{equation}\label{e:lc-GL}
\Theta_\pi=\sum_{\lambda\in \CP(n)} c_{\CO_\lambda}(\pi) ~\Theta_{\Ind_{P(\lambda)}^G(\mathbf 1)}, \text{ valid on the space } C_c^\infty(G_{\dth(\pi)+}),
\end{equation}
where $\dth(\pi)$ is the depth of $\pi$ and $\G_{r+}$ for $r\ge0$ is the set of all elements in $\G$ of depth greater than $r$.

\subsection{} Let us restrict now to the case when $\dth(\pi)=0$ and consider representations $\pi$ with fixed vectors under an Iwahori subgroup. Let $K_0=\bfGL_n(\CaO_F)$ be the maximal hyperspecial subgroup  and let $I\subset K_0$ be the Iwahori subgroup given by the pull-back of the upper triangular Borel subgroup under the projection map $K_0\to \overline K_0=\bfGL_n(\bbF_q)$. Let $(\pi,V_\pi)$ be an irreducible smooth $G$-representation such that $V_\pi^I\neq 0$.

We recall the Langlands classification of irreducible $\G$-representations $\Irr_I G$ with $I$-fixed vectors. Let $\St_n$ denote the Steinberg representation of $\G$. 
The representation $\St_n$ is square-integrable (modulo the center). Let $\al$ be a composition of $n$, i.e., $\al$ is a tuple of positive integers (not necessarily in non-increasing order) $(\al_1,\dots,\al_\ell)$ such that $\sum \al_i=n$. Let $M_\al=\prod_{i=1}^\ell \GL_{\al_i}$ be the corresponding block-diagonal Levi subgroup and let $P_\al\supset M_\al$ be the upper-triangular parabolic subgroup. For every collection of complex numbers $\nu=(\nu_1,\dots,\nu_\ell)$ such that $\Re \nu_1\ge \Re \nu_2\ge\dots\ge \Re\nu_\ell$, define the standard representation
\begin{equation}\label{standard-gl}
\CI(\al;\nu)=\ind_{P_\al}^G(\St_{\al_1} |\det|_{F}^{\nu_1}\boxtimes\dots\boxtimes \St_{\al_\ell} |\det|_{F}^{\nu_\ell})
\end{equation}
where $\ind_P^G$ denotes the functor of normalized parabolic induction. 
This standard representation has a unique irreducible quotient $\pi(\al;\nu)$ and every irreducible representation in $\Irr_IG$ is obtained uniquely in this way. 

The representation $\pi(\al;\nu)$ is $K_0$-spherical if and only if $\al=(1,1,\dots,1)$, in which case $\CI(\al;\nu)$ is an unramified principal series (with dominant parameter $\nu$).

If $\nu_1=\dots=\nu_\ell=0$, then write $\CI(\al;0)$ for the standard representation. This is irreducible and tempered. In particular, $\pi(\al;0)=\CI(\al;0)$.

Moreover, in the Grothendieck group, or equivalently, as the level of distribution characters, one may write
\begin{equation}\label{mult-gl}
\Theta_{\pi(\al;\nu)}=\Theta_{\CI(\al,\nu)}+\sum_{\al',\nu'} m((\al;\nu),(\al',\nu'))~\Theta_{\CI(\al',\nu')},
\end{equation}
for some integers $m((\al;\nu),(\al',\nu'))$. To make this more precise, we need to recall the geometric classification of \cite{KL}.

\medskip

Let $G^\vee=\bfGL_n(\bbC)$ be the Langlands dual complex group with Lie algebra $\fg^\vee$ and maximal diagonal torus $T^\vee$ with Cartan subalgebra $\fk t^\vee$. The geometric Kazhdan-Lusztig classification \cite{KL} says that there is a natural one-to-one correspondence the set of $G^\vee\text{-orbits in }\{(\tau^\vee,e^\vee)\mid \tau^\vee\in T^\vee,\ e^\vee\in \fg^\vee,\ \Ad(\tau^\vee) e^\vee = q^{-1} e^\vee\}$ and $\Irr_I G$:
\[
(\tau^\vee,e^\vee)\leftrightarrow \pi(\tau^\vee,e^\vee). 
\]
Let $\fk t^\vee_\bbR$ be the real part of $\fk t^\vee$. For every $s^\vee\in \fk t^\vee_\bbR$ and $e^\vee\in\fg^\vee$ such that $[s^\vee,e^\vee]=-e^\vee$, we also write
\[\pi(q^{s^\vee},e^\vee)\text{ for }\pi(s^\vee,e^\vee),\text{ where } q^{s^\vee}=\exp(\log q\cdot s^\vee).
\]

\begin{example}\rm In this notation, we have
\[
\begin{array}{llll}
\pi(-\rho^\vee,e_{(1,\dots,1)})&=&\mathbf 1_G &\text{(the trivial representation)},\\ 
\pi(-\rho^\vee,e_{(n)})&=&\St_G &\text{(the Steinberg representation)}.
\end{array}
\]
Here $\rho^\vee\in\fk t^\vee_\bbR$ is the half-sum of positive coroots of $G^\vee$. 
\end{example}

Let $G^\vee(s^\vee)$ denote the centralizer of $s^\vee$, which is Levi subgroup. For a fixed $s^\vee\in \fk t^\vee_\bbR$, if we restrict to the representations of type $\pi(s^\vee,e^\vee)\in\Irr_I(\G)$ with $e^\vee\in\lieG^\vee_{-1}$, then  there is a one-to-one correspondence between irreducible representations $\pi(s^\vee,e^\vee)$ and 
\[G^\vee(s^\vee)\text{-orbits in }\fg^\vee_{-1}=\{X^\vee\in \fg^\vee\mid [s^\vee,X^\vee]=-X^\vee\},\quad \pi(s^\vee,e^\vee)\leftrightarrow e^\vee.
\]
There are finitely many $G^\vee(s^\vee)$-orbits in $\fg^\vee_{-1}$ and hence a unique open orbit.

To describe the dictionary with the parabolic Langlands classification as above (where we now assume $\nu_1,\dots,\nu_\ell\in\bbR$), let $\overline\al$ denote the partition of $n$ obtained by permuting the entries of $\al=(\al_1,\al_2,\cdots,\al_\ell)$. Let $(\frac{a-1}2,\frac{a-3}2,\dots,-\frac{a-1}2)$ denote the coordinate entries of $\rho^\vee$ in $\bfGL_a(\bbC)$. Let 
\[s^\vee_\nu=(-\frac{\alpha_1-1}2+\nu_1,-\frac{\alpha_1-3}2+\nu_1,\dots,\frac{\alpha_1-1}2+\nu_1,\dots,-\frac{\alpha_\ell-1}2+\nu_\ell,-\frac{\alpha_\ell-3}2+\nu_\ell,\dots,\frac{\alpha_\ell-1}2+\nu_\ell),
\]
viewed as an element of $\fk t^\vee_\bbR.$ Let $e^{\vee}_{\al}$ denote the nilpotent element given by the upper-triangular Jordan blocks of sizes corresponding to the entries of $\al$. Then indeed 
\[[s^\vee_\nu,e^{\vee}_{\al}]=-e^{\vee}_\al.\]
Notice that $e^{\vee}_\al$ belongs to the nilpotent orbit $\CO_{\overline\al}$. The correspondence is:
\begin{equation}
\pi(\al;\nu) = \pi(s^\vee_\nu,e^{\vee}_{\al}).
\end{equation}
In relation to the identity (\ref{mult-gl}), the coefficients $m((\al;\nu),(\al',\nu'))$ admit a geometric interpretation \cite{zelevinsky-p-adic}, via a version of Kazhdan-Lusztig polynomials, which in particular implies that
\begin{equation}\label{mult-geom}
m((\al;\nu),(\al',\nu'))\neq 0 \text{ only if } s^\vee_{\nu'} \text{ is $S_n$-conjugate to } s^\vee_{\nu}\text{ and } e_\al\in \overline \CO_{\overline{\al'}}.
\end{equation}

\smallskip

In $\G=\GL_n$, the set of compact elements $\G_0$ are the $\G$-orbit of $K_0$: $G_{0}=G\cdot K_0$ (this is due to the fact that all maximal compact subgroups of $\GL_n$ are conjugate). Since the unramified characters are trivial on compact elements, equation (\ref{mult-gl}) implies that for all $\al$ and $\nu_1,\dots,\nu_\ell\in \bbR$:
\[\Theta_{\pi(\al;\nu)}=\Theta_{\CI(\al,0)}+\sum_{\al',\nu'} m((\al;\nu),(\al',\nu'))~\Theta_{\CI(\al',0)} \text{ on } C^\infty_c(G_{0}).
\]
Moreover, the tempered characters $\Theta_{\CI(\al',0)}$ only depend on the partition $\overline{\al'}$ and not on the composition $\alpha'$ itself. Taking also into account (\ref{mult-geom}), we see that there exist integers 
\[
\wti m(\al,\lambda;\nu)=\sum_{\overline{\al'}=\lambda,\nu'} m((\al;\nu),(\al',\nu'))
\]
 such that
\begin{equation}
\Theta_{\pi(\al;\nu)}=\Theta_{\CI(\overline \al,0)}+\sum_{\lambda\in \mathcal P(n),\overline \al<\lambda} \wti m(\al,\lambda;\nu) ~\Theta_{\CI(\lambda,0)} \text{ on } C^\infty_c(G_{0^+}).
\end{equation}
The involution $\AZ: \Irr_I(G)\to \Irr_I(G)$ commutes with parabolic induction and it swaps $\St_G$ and $\mathbf 1_G$. This means in particular that
\begin{equation}
\Theta_{\AZ(\CI(\lambda,0))}=\Theta_{\ind_{P_\lambda}^G(\mathbf 1)}=\Theta_{\Ind_{P(\lambda^t)}^G(\mathbf 1)}\text{ on } C^\infty_c(G_{0^+}).
\end{equation}
Note that the characters of normalized and unnormalized induction are same on $G_{0^+}$. This gives the identity
\begin{equation*}
\Theta_{\AZ(\pi(\al;\nu))}=\Theta_{\Ind_{P(\overline\al^t)}^G(\mathbf 1)}+\sum_{\lambda\in \mathcal P(n),\overline \al<\lambda} \wti m(\al,\lambda;\nu) ~\Theta_{\Ind_{P(\lambda^t)}^G(\mathbf 1)}.
\end{equation*}
Since the transpose on partitions reverses the dominance order, we may rewrite this as
\begin{equation}\label{IM-char-gl} 
\Theta_{\AZ(\pi(\al;\nu))}=\Theta_{\Ind_{P (\overline \al^t)}^G(\mathbf 1)}+\sum_{\mu\in \mathcal P(n),\mu<\overline\al^t} \wti m(\al,\mu^t;\nu) ~\Theta_{\Ind_{P(\mu)}^G(\mathbf 1)}\quad \text{ on } C^\infty_c(G_{0^+}).
\end{equation}
For consistency, we may set $\wti m(\al,\overline\al)=1$. 
\begin{theorem}
Let $\al=(\alpha_1,\cdots,\alpha_\ell)\in\mathcal P(n)$, $\nu=(\nu_1,\dots,\nu_\ell)$, $\nu_i\in \bbR$ with $\nu_1\ge\nu_2\ge\dots\ge\nu_\ell$. Let $\pi(\al;\nu)$ be the irreducible Langlands representation which is the quotient of the standard representation $\CI(\al;\nu)$ in (\ref{standard-gl}). Then the local character expansion of the $\AZ$-dual of $\pi(\al;\nu)$ is given by (\ref{IM-char-gl}), hence
\[ c_{\CO_\lambda}(\AZ(\pi(\al;\nu))=\wti m(\al,\lambda^t;\nu).
\]
 In particular, the wavefront set orbit of $\AZ(\pi(\al;\nu))$ is $\CO_{\overline\al^t}$ and the leading coefficient is $1$.
\end{theorem}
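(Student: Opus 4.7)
The plan is to recognize the identity (\ref{IM-char-gl}), which has already been assembled in the discussion preceding the theorem, as the HCH local character expansion of $\AZ(\pi(\alpha;\nu))$ on $C_c^\infty(G_{0^+})$, and then to extract all three claims (the expansion, the coefficient formula, and the wavefront orbit) by direct comparison with (\ref{e:lc-GL}).

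First I would verify the chain leading to (\ref{IM-char-gl}). Starting from the Kazhdan--Lusztig multiplicity identity (\ref{mult-gl}), one restricts to $C_c^\infty(G_0)$: unramified characters are trivial on compact elements, so on the tempered side only the partition $\overline{\alpha'}$ of $\alpha'$ matters, which produces the aggregated integers $\widetilde m(\alpha,\lambda;\nu) = \sum_{\overline{\alpha'}=\lambda,\,\nu'} m((\alpha;\nu),(\alpha',\nu'))$. Next I apply $\AZ$: since it commutes with parabolic induction and swaps $\St_n$ with $\mathbf 1_n$, one has $\AZ(\CI(\lambda,0)) = \ind_{P_\lambda}^G(\mathbf 1)$, whose character on $C_c^\infty(G_{0^+})$ agrees with that of $\Ind_{P(\lambda^t)}^G(\mathbf 1)$ (normalized versus unnormalized induction coincide there). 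Reindexing by $\mu = \lambda^t$ and using that transposition of partitions reverses the dominance order yields (\ref{IM-char-gl}).

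The second step is to match (\ref{IM-char-gl}) against the general HCH expansion (\ref{e:lc-GL}). Using the identification $\widehat{\mu}_{\CO_\lambda} = \Theta_{\Ind_{P(\lambda)}^G(\mathbf 1)}$ on $C_c^\infty(G_{0^+})$ together with the linear independence of nilpotent orbital integrals (which is precisely what makes the HCH expansion unique), one reads off term by term
\[
c_{\CO_\lambda}(\AZ(\pi(\alpha;\nu))) = \widetilde m(\alpha,\lambda^t;\nu),
\]
with the convention that $\widetilde m(\alpha,\lambda^t;\nu) = 0$ whenever $\lambda \not\le \overline{\alpha}^t$. In particular, for $\lambda = \overline{\alpha}^t$ the coefficient equals $\widetilde m(\alpha,\overline{\alpha};\nu) = 1$, reflecting the fact that $\pi(\alpha;\nu)$ appears with multiplicity one as the Langlands quotient of $\CI(\alpha;\nu)$. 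Since $\overline{\alpha}^t$ is the unique maximum of the set $\{\lambda \in \CP(n) \mid c_{\CO_\lambda} \ne 0\}$ in the dominance order, the wavefront orbit of $\AZ(\pi(\alpha;\nu))$ is $\CO_{\overline{\alpha}^t}$ with leading coefficient $1$.

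The hard part is really the bookkeeping in the first step: one needs to ensure that passing from the Grothendieck-group identity (\ref{mult-gl}) to a distributional identity on $C_c^\infty(G_{0^+})$ preserves the strict unitriangularity of the coefficients, so that the leading term in (\ref{IM-char-gl}) remains exactly $\Theta_{\Ind_{P(\overline{\alpha}^t)}^G(\mathbf 1)}$ with coefficient $1$. This is guaranteed by the geometric constraint (\ref{mult-geom}), which forces $e_\alpha \in \overline{\CO}_{\overline{\alpha'}}$, equivalently $\overline{\alpha} \le \overline{\alpha'}$; transposition then reverses the ordering and places the support of the sum in $\{\mu < \overline{\alpha}^t\}$, as stated.
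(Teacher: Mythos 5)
Your proposal is correct and matches the paper's own proof: both restrict the identity~(\ref{IM-char-gl}) to $C_c^\infty(G_{0^+})$, invoke $\widehat\mu_{\CO_\lambda}=\Theta_{\Ind_{P(\lambda)}^G(\mathbf 1)}$ from~(\ref{e:lc-GL}), and conclude by linear independence of the distributions $\Theta_{\Ind_{P(\lambda)}^G(\mathbf 1)}$ on regular semisimple elements of $G_{0^+}$. The recapitulation of how~(\ref{IM-char-gl}) is obtained from~(\ref{mult-gl}) and~(\ref{mult-geom}) is sound but is carried out in the paper's text preceding the theorem rather than inside the proof itself.
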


\begin{proof}
We have $G_{0+}\subset G_0$ (as an open and closed subset), hence (\ref{IM-char-gl}) holds on $C_c^\infty(G_{0+})$ by restriction. Comparing with (\ref{e:lc-GL}) the claim follows since the distributions $\Theta_{\Ind_{P(\lambda)}^G(1)}$ are linearly independent on $G_{0+,\sms}^\reg$.
\end{proof}

Now, Theorem \ref{thm: MW} also follows from the above, since $\mathbb O^\vee_{\pi(\alpha;\nu)}=\mathbb O^\vee_{\overline\alpha}$, and $d^\vee(\mathbb O^\vee_{\overline\alpha})=\CO_{\overline\al^t}$.

\begin{example}\rm
Suppose $\pi(\al;\nu)$ is generic in the sense of admitting (nondegenerate) Whittaker models. It is known that this is the case if and only if $\pi(\al;\nu)=\CI(\al;\nu)$. Assume this is the case. Since 
\[\pi(\al;\nu)=\CI(\al;\nu)=\AZ(\ind_{P_\al}^G(|\det|^{\nu_1}\boxtimes\dots\boxtimes |\det|^{\nu_\ell}))=\AZ(\pi((1,\dots,1);\nu)),\]
it follows that the wave front set of $\pi(\al;\nu)$ is indeed $(1,\dots,1)^t=(n)$, a well-known result of Rodier.
\end{example}

\begin{example}\rm
Let $\CO^\vee_\lambda$ be a nilpotent orbit in $\fg^\vee$. Fix a Lie triple $\{e^\vee_\lambda,h^\vee_\lambda,f^\vee_\lambda\}$, $e^\vee_\lambda\in \CO^\vee_\lambda$ and $h^\vee_\lambda\in \fk t^\vee_\bbR$. Denote
\[\pi_{\mathsf{sph}}(\CO^\vee)=\AZ(\pi(-\frac 12 h^\vee_\lambda,e^\vee_\lambda)),
\]
a $K_0$-spherical $G$-representation. We call this a \emph{special (spherical) unipotent representation}. It is easy to see that
\[\pi_{\mathsf{sph}}(\CO^\vee_\lambda)\cong \ind_{P_\lambda}^G(\mathbf 1),
\]
which implies that its local character expansion is
\begin{equation}
\Theta_{\pi_{\mathsf{sph}}(\CO^\vee_\lambda)}(1+X)=\widehat\mu_{\CO_{\lambda^t}}(X), \text{ for all regular semisimple }X\in \fg_{0+}.
\end{equation}
In particular, the wave front set orbit of $\pi_{\mathsf{sph}}(\CO^\vee_\lambda)$ is $\CO_{\lambda^t}$.
\end{example}

\

\section
{\bf Bernstein--Zelevinsky classification and Langlands parameters}

\subsection{} {\bf Classification of Irreducible representations of $\G_n=GL_n$}\ 

In this section, we review Bernstein--Zelevinsky classification of smooth irreducible representations of $\G_n$. We fix a torus $T$ and a Borel subgroup $B$ containing $T$.
For $m\in\bbN$, let $\nu_m$ denote the character of $\G_m$ given by $|\det(\cdot)|$. When there is no confusion, we will simply write $\nu$ for $\nu_m$. Let $\CaC_m$ denote the set of all supercuspidal representations of $\G_m$, and $\CaC=\cup_m\CaC_m$. For $\rho\in\CaC_m$, $k\in\bbN$, let $\Delta=\Delta(\rho,k)$ denote the segment $\langle\rho,\nu\rho,\nu^2\rho,\cdots,\nu^{k-1}\rho\rangle$. Let 
$\rho\times\nu\rho\times\cdots\times\nu^{k-1}\rho$ denote 
$\ind_{P_{m,k}}^{\G_{mk}}\rho\otimes\cdots\otimes\nu^{k-1}\rho$ where $P_{m,k}$ is the block upper-triangular parabolic subgroup with Levi subgroup isomorphic to $\prod_{i=1}^k\G_m$.
Let $\langle\Delta\rangle$ (resp. $\langle\Delta\rangle^t$) denote the unique irreducible submodule (resp. quotient) of  $\rho\times\nu\rho\times\cdots\times\nu^{k-1}\rho$. For $\Delta(\rho,k)$ and $\Delta(\rho',k')$, we say that $\Delta(\rho,k)$ precedes $\Delta(\rho',k')$ if $\rho'=\nu^\alpha\rho$ for some $\alpha\in\bbN$.

\begin{thm} (\cite[Thm 6.1]{Zel})
\begin{enumerate}
\item 
Let $\Delta_1,\cdots,\Delta_r$ be segments in $\mathcal C$. Suppose for each pair of indices $i,j$ such that $i<j$, $\Delta_i$ does not precede $\Delta_j$. Then the representation $\langle\Delta_1\rangle\times\cdots\times \langle\Delta_r\rangle$ has a unique irreducible submodule; denote it by $\langle\Delta_1,\cdots,\Delta_r\rangle$.
\item
The representations $\langle\Delta_1,\cdots,\Delta_r\rangle$ and 
$\langle\Delta'_1,\cdots,\Delta'_s\rangle$ are isomorphic if and only if the sequences 
$(\Delta_1,\cdots,\Delta_r)$ and $(\Delta'_1,\cdots,\Delta'_s)$ are equal up to a rearrangement.
\item
Any irreducible representation of $\G_n$ is isomorphic to some representation of the form $\langle\Delta_1,\cdots,\Delta_r\rangle$.
\end{enumerate}
\end{thm}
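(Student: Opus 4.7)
The plan is to follow Zelevinsky's original strategy (and its modernization via Bernstein--Zelevinsky derivatives), proving (1), (2), (3) by a joint induction on $n$. First I would set up the basic building blocks: for a single segment $\Delta=\Delta(\rho,k)$, the induced representation $\rho\times\nu\rho\times\cdots\times\nu^{k-1}\rho$ is known to have a unique irreducible submodule $\langle\Delta\rangle$ (essentially a Langlands-subrepresentation statement, provable by computing its Jacquet module via the Geometric Lemma). I would also record the key unlinking lemma: if $\Delta_1$ and $\Delta_2$ are segments neither of which precedes the other, then $\langle\Delta_1\rangle\times\langle\Delta_2\rangle$ is irreducible, and in particular $\langle\Delta_1\rangle\times\langle\Delta_2\rangle\cong\langle\Delta_2\rangle\times\langle\Delta_1\rangle$. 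This is the technical heart of the theorem and is proved by an explicit Jacquet-module/intertwining-operator computation: the standard intertwiner between the two orderings is shown to be an isomorphism precisely because no segment precedes the other.

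For part (1), with the ordering hypothesis in hand, I would induct on $r$. Setting $\pi=\langle\Delta_1\rangle\times\cdots\times\langle\Delta_r\rangle$, any irreducible submodule $\sigma$ of $\pi$ must, by Frobenius reciprocity, have a nonzero morphism from $\langle\Delta_1\rangle\boxtimes\cdots\boxtimes\langle\Delta_r\rangle$ into a Jacquet module of $\sigma$ along $P_{n_1,\ldots,n_r}$. Using the Geometric Lemma to compute the Jacquet module of $\pi$ and the ordering hypothesis (which ensures that any permutation moving $\Delta_i$ past $\Delta_j$ with $i<j$ produces a strictly ``smaller'' contribution in the appropriate cuspidal-support order), I would isolate a unique term that can contain the top cuspidal exponent, forcing uniqueness of $\sigma$; call it $\langle\Delta_1,\ldots,\Delta_r\rangle$. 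The main obstacle is controlling this Jacquet-module combinatorics cleanly: one needs a filtration argument showing that no ``later'' summand can contribute a second irreducible submodule, which I would formalize via the Bernstein--Zelevinsky partial order on multisegments.

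For (2) and (3) I would use the highest Zelevinsky derivative. Given $\pi=\langle\Delta_1,\ldots,\Delta_r\rangle$, one shows by a standard computation that the highest derivative $\pi^{(-)}$ is irreducible and equals $\langle\Delta_1^-,\ldots,\Delta_r^-\rangle$, where $\Delta_i^-$ removes the last cuspidal of $\Delta_i$ (when that cuspidal is maximal among the endpoints); inductively this recovers the multiset $\{\Delta_1,\ldots,\Delta_r\}$ from $\pi$, giving (2). For (3), any irreducible $\pi$ of $\G_n$ lies in some Bernstein component with supercuspidal support $(\rho_1,\ldots,\rho_s)$; grouping the $\rho_j$ into segments and iterating the derivative, I would match $\pi$ to some $\langle\Delta_1,\ldots,\Delta_r\rangle$. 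The expected main obstacle throughout is the non-precedence bookkeeping in the Jacquet-module computation of step (1): one must carefully argue that among the many terms produced by the Geometric Lemma, exactly one is a candidate ``bottom'' term, and that rearranging to any other permutation strictly increases its image under some suitable length function on multisegments.
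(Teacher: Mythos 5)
The paper does not prove this theorem; it simply cites it as \cite[Thm 6.1]{Zel}, so there is no ``paper's own proof'' to compare against. Your proposal is a faithful summary of Zelevinsky's original strategy: the unlinking lemma (irreducibility of $\langle\Delta_1\rangle\times\langle\Delta_2\rangle$ when neither precedes the other) plus a Geometric-Lemma/Jacquet-module argument for the existence and uniqueness of the socle in (1), and the Bernstein--Zelevinsky derivative machinery for the injectivity and exhaustion in (2) and (3). That is indeed how the result is established in \cite{Zel}, so the route is the same, not a genuinely different one.

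One caution on the derivative step: the formula you invoke, that the highest derivative of $\langle\Delta_1,\ldots,\Delta_r\rangle$ is $\langle\Delta_1^-,\ldots,\Delta_r^-\rangle$, is essentially Zelevinsky's Theorem 8.1, and one has to be careful that the version needed in the proof of (2)--(3) is available before the classification is established; Zelevinsky arranges the induction so that only the derivative of a single $\langle\Delta\rangle$ (his Proposition 3.5 and the results of \S 4) together with the Leibniz rule for derivatives of induced modules is used inside the proof of Theorem 6.1, and the cleaner multisegment formula $\langle a\rangle^{(-)}=\langle a^-\rangle$ is proved afterwards as a consequence. Your parenthetical ``(when that cuspidal is maximal among the endpoints)'' is gesturing at the right normalization, but it should be the \emph{ends} $e(\Delta_i)$ that get truncated, and the segments must first be grouped by these ends for the Leibniz-rule count to isolate a unique nonvanishing top term. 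As a proof sketch this is fine, but a full proof would have to make that induction non-circular in the way Zelevinsky does.
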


Let $\Sigma$ be the set of all segments and $\Theta$ be the set of all finite multisets on $\Sigma$.
Any $\theta=\{\Delta_1,\Delta_2,\cdots, \Delta_r\}\in\Theta$ can be rearranged so that the condition in (1) is satisfied. Let $\tau(\theta)$ be the associated irreducible representation. The support of a segment $\Delta(\rho,k)$ is the set $\supp(\Delta)=\{\rho,\nu\rho,\nu^2\rho,\cdots,\nu^{k-1}\rho\}$. The support of a multisegment $\theta$ is the disjoint union of the supports of its segments, $\supp(\theta)=\overset\circ\bigcup_i~\supp(\Delta_i)$.

\begin{remarks}\rm \ 
\begin{enumerate}
\item 
{If $\pi=\langle\Delta_1,\cdots,\Delta_r\rangle$ and $\Delta_i=\Delta(\rho_i,k_i)$ with $\rho_i\in\CaC_{m_i}$, then $\pi\in\irr^\frs(\G_n)$ where $\frs$ is the inertia class of $\left(M_{(m_1^{k_1},\cdots,m_r^{k_r})}, \,\bigotimes_{i=1}^r(\otimes^{k_i}\rho_i)\right)$.}

\item
For $\theta\in\Theta$, if $\supp(\theta)$ is contained in $\{\nu^\alpha\rho\mid \alpha\in\bbC\}$ for a fixed supercuspidal representation $\rho$ of $\G_m$, $\tau(\theta)$ contains a {\it pure} unrefined minimal $\sfK$-type (see Definition \ref{def: pure type}).
\end{enumerate}
\end{remarks}

\subsection{} {\bf Representations of Weil groups} \cite[\S10]{Zel}\ 

Let $W_F$ denote the Weil group of $F$. Let $\nu'$ be the character of $W_F$ corresponding to the norm character of $F^\times$.
Denote by $\CaW$ the set of isomorphism classes of pairs $(\sigma, N)$ where $\sigma$ is an algebraic completely reducible representation of $W_F$ on the $\bbC$-vector space $V$ and $N:V\rightarrow V$ belongs to $\Hom_{W_F}(\nu'\sigma,\sigma)$.
Denote by $\CaW_n$ be the subset of $(\sigma,N)\in\CaW$ with $\dim(\sigma)=n$.
The local Langlands correspondence states that there is a natural bijection between $\irrGn$ and $\CaW_n$. Since there is one to one correspondence between $\Ldual$ and $\CaW_n$ (where $\Ldual$ is as in \S\ref{sec:1.1}), the local Langlands correspondence map $\irrGn\ni\omega\mapsto(\sigma(\omega),N(\omega))\in\CaW_n$ has following properties:

\smallskip

\begin{enumerate}
\item 
$\nu\omega$ corresponds to the pair $(\nu'\sigma(\omega),N(\omega))$, that is, 
$(\sigma(\nu\omega),N(\nu\omega))=(\nu'\sigma(\omega),N(\omega))$.
\item
$\omega$ is cuspidal if and only if $\sigma(\omega)$ is irreducible (and thus $N(\omega)=0$).
\item
If ${\rm supp}(\omega)=\{\rho_1,\cdots,\rho_r\}$, then 
$\sigma(\omega)
\simeq\sigma(\rho_1)\oplus\cdots\oplus\sigma(\rho_r)$.
\end{enumerate}

\smallskip

Denote by $\CaC'$ be the set of equivalence classes of irreducible finite dimensional representations of $W_F$. A segment in $\CaC'$ is a subset $\Delta'=[\sigma,\sigma']\subset\CaC'$ of the form $\Delta'=\{\sigma,\nu'\sigma,\nu^{\prime2}\sigma,\cdots,\nu^{\prime k-1}\sigma=\sigma'\}$ for $k\in\bbN$. Let $\Sigma'$ be the set of all segments in $\CaC'$ and $\Theta'$ be the set of all finite multisets on $\Sigma'$.

To each segment $\Delta'=[\sigma,\sigma']\in\Sigma'$, we assign
\[
\tau(\Delta')=(\sigma(\Delta'),N(\Delta'))\in\CaW
\]
where $\sigma(\Delta')=\sigma\oplus\nu'\sigma\oplus\cdots\oplus\nu^{\prime k-1}\sigma$ and $N(\Delta')\in\Hom(\nu'\sigma(\Delta'),\sigma(\Delta'))$ of maximal rank. In this case, if $\dim(\sigma)=m$, $\tau(\Delta')\in\CaW_{mk}$ and $N(\Delta')$ is the nilpotent orbit of $\mathfrak{gl}_{mk}$ corresponding to the partition $m^k=(m,m,\cdots,m)$ of $mk$.

\begin{lemma} \ 
\begin{enumerate}
\item The objects $\tau(\Delta')$ $(\Delta'\in\Sigma)$ are indecomposable and mutually non-isomorphic, and each indecomposable object of $\CaW$ is of this form.
\item Each object of $\CaW$ decomposes into the direct sum $\tau(\Delta_1')\oplus\cdots\oplus\tau(\Delta_r')$. This decomposition is unique up to permutation.
\end{enumerate}
\end{lemma}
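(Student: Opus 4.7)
\medskip

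\noindent\textbf{Proof plan.} The plan is to convert the problem into a classification of indecomposable representations of a disjoint union of type-$A$ quivers via the Weil-group isotypic decomposition. Given $(\sigma,N)\in\CaW$ with underlying space $V$, complete reducibility lets me write
\[
\sigma \;=\; \bigoplus_{\rho\in\CaC'}\, V_\rho\otimes \rho,
\]
where $V_\rho$ is the (finite-dimensional) multiplicity space and only finitely many $V_\rho$ are nonzero. Since $N\in\Hom_{W_F}(\nu'\sigma,\sigma)$, Schur's lemma applied componentwise shows that $N$ is entirely encoded by a family of linear maps $N_\rho\colon V_{\nu'\rho}\to V_\rho$, one for each $\rho$. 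So the category $\CaW$ is equivalent to the category whose objects are finite-dimensional representations of the quiver $Q$ whose vertex set is $\CaC'$ and whose arrows are $\nu'\rho\to\rho$.

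The next step is to analyze $Q$. Since $\nu'$ has infinite order as a character of $W_F$, a determinant argument shows that $(\nu')^k\rho\cong\rho$ forces $k=0$; hence every $\langle\nu'\rangle$-orbit on $\CaC'$ is infinite, and $Q$ is the disjoint union over orbits of copies of the type $A_\infty^\infty$ quiver (linearly ordered, infinite in both directions). A finite-dimensional representation of $Q$ has support on a finite subquiver, which is a disjoint union of finite type $A_n$ quivers with all arrows oriented the same way. By the classical classification (Gabriel's theorem, or directly: put $N$ in a suitable block form and peel off Jordan-type strings), the indecomposable finite-dimensional representations of such a quiver are exactly the ``string'' representations supported on intervals with one-dimensional spaces at each vertex and identity arrows. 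Under the dictionary above, an interval in a $\langle\nu'\rangle$-orbit is precisely a segment $\Delta'=[\sigma,\sigma']\in\Sigma'$, and the corresponding string representation is $\tau(\Delta')$. This proves the first assertion: the $\tau(\Delta')$ are indecomposable, pairwise non-isomorphic (distinct intervals give distinct supports or lengths), and exhaust the indecomposables.

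For part (2), I invoke Krull--Schmidt. The category $\CaW$ is additive with finite-dimensional Hom-spaces, and the endomorphism ring of each indecomposable $\tau(\Delta')$ is easily checked to be local (it is $\bbC$, since an endomorphism must commute with the identity arrows of the string, hence is a scalar). Therefore every object decomposes into a direct sum of indecomposables uniquely up to permutation and isomorphism, which gives the desired decomposition $\tau(\Delta'_1)\oplus\cdots\oplus\tau(\Delta'_r)$.

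The only step that requires any real care is the quiver classification in the second paragraph; the rest is a bookkeeping translation. The main potential pitfall is the possibility of finite $\langle\nu'\rangle$-orbits (which would produce cyclic quivers of wild type behavior), but the infinite-order argument for $\nu'$ rules this out cleanly.
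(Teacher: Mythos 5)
The paper states this lemma without proof, recalling it from Zelevinsky \cite[\S10]{Zel}; your argument reconstructs the standard proof underlying that citation (isotypic decomposition of $\sigma$ under $W_F$, reduction to a type-$A$ quiver/nilpotent Jordan analysis on multiplicity spaces, Krull--Schmidt for uniqueness) and is correct in substance. One small slip in the bookkeeping: since the $\rho$-isotypic summand of $\nu'\sigma$ carries multiplicity space $V_{(\nu')^{-1}\rho}$, Schur's lemma applied to $N\in\Hom_{W_F}(\nu'\sigma,\sigma)$ yields maps $N_\rho\colon V_\rho\to V_{\nu'\rho}$, not $V_{\nu'\rho}\to V_\rho$, so the arrows of your quiver $Q$ should point $\rho\to\nu'\rho$. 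This reversal of orientation is harmless --- the indecomposables of a linearly oriented $A_n$ quiver are still exactly the interval-supported strings, which match the segments $\Delta'$ and hence the $\tau(\Delta')$ --- but it is worth recording the correct direction. Your determinant argument that the $\langle\nu'\rangle$-orbits on $\CaC'$ are infinite, and the observation that $\End(\tau(\Delta'))\cong\bbC$ (so Krull--Schmidt applies), are both fine.
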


Denote the set of all finite multisets on $\Sigma'$ by $\Theta'$. To $\theta'\in\Theta'$, let
\[
\tau(\theta')=\sum_{\Delta'\in \theta'}\tau(\Delta')\in\CaW.
\]
By the above proposition, $\theta'\mapsto\tau(\theta')$ is a bijection between $\Theta'$ and $\CaW$.

\subsection{\bf Local Langlands Correspondence}\label{subsec: LLC}\ 

In \cite[\S10]{Zel}, the Langlands correspondence is described modulo supercuspidal representations. More precisely,
if $\theta\in\Theta$ and $\theta'\in\Theta'$ correspond to each other under the bijection $\Theta\rightarrow\Theta'$ via local Langlands correspondence of supercuspidal representations, then $\tau(\theta')\in\CaW$ corresponds to $\AZ(\langle \theta\rangle)\in\cup_m\irrGm$.

\ 

\section{\bf Twisted Levi subgroups and Spaltenstein duality}

\subsection{\bf Spaltenstein duality and outer automorphisms}

\ 

Let $\CG$, $\CG^\vee$, $\mathfrak g$ and $\mathfrak g^\vee$ be as in \S\ref{subsec: Spaltenstein}. 
Since the nilpotent elements live in the derived subalgebra of $\mathfrak g$, without loss of generality, we can assume from now on that $\mathfrak g$ (also $\mathfrak g^\vee$) is semisimple.

The group of outer automorphisms of $\mathfrak g$ can be identified with the group of graph automorphisms of the Dynkin diagram of $\mathfrak g$, hence $\text{Out}(\mathfrak g)=\text{Out}(\mathfrak g^\vee)$. If $\tau\in \text{Out}(\mathfrak g)$ is an automorphism of $\mathfrak g$, denote by $\tau^\vee$ the corresponding automorphism of $\mathfrak g^\vee$.

\begin{proposition}\label{prop: nil duality}
    For every $\mathcal O^\vee\in \mathcal G^\vee\backslash \mathcal N^\vee$ and every $\tau\in \mathrm{Out}(\mathfrak g)$,
    \[
    d(\tau^\vee(\mathcal O^\vee))=\tau (d(\mathcal O^\vee)).
    \]
\end{proposition}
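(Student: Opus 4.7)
The plan is to prove the equivariance by first reducing to the Springer correspondence on special orbits, then extending to all orbits via Lusztig's special pieces. As a preliminary, I identify the two Weyl groups: since the root system of $\fg^\vee$ is dual to that of $\fg$, the assignment $s_\alpha\mapsto s_{\alpha^\vee}$ on simple reflections gives a canonical isomorphism $W\cong W^\vee$ of Coxeter groups, and under this isomorphism the action of $\tau$ on $W$ coincides with the action of $\tau^\vee$ on $W^\vee$, since both are induced by the same underlying permutation of the Dynkin diagram. Any such diagram automorphism preserves the length function on $W$ and therefore fixes the sign character $\sgn$.

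Next I treat the case when $\mathcal O^\vee$ is a special orbit. Write $\mathsf{Spr}$ for the injection from special $\CG$-orbits into $\widehat W$ sending an orbit to its Springer representation attached to the trivial local system (and likewise for $\CG^\vee$). By a theorem of Lusztig, tensoring with $\sgn$ preserves the set of special representations, and Spaltenstein duality on special orbits is characterized by
\[
\mathsf{Spr}(d(\mathcal O^\vee))\;=\;\mathsf{Spr}(\mathcal O^\vee)\otimes\sgn
\]
under the identification $W\cong W^\vee$. Because the Springer construction depends only on root-system data preserved by $\tau$, the correspondence $\mathsf{Spr}$ is equivariant for diagram automorphisms, so
\[
\mathsf{Spr}(d(\tau^\vee(\mathcal O^\vee)))\;=\;\tau^\vee_*\mathsf{Spr}(\mathcal O^\vee)\otimes\sgn\;=\;\tau_*\bigl(\mathsf{Spr}(\mathcal O^\vee)\otimes\sgn\bigr)\;=\;\mathsf{Spr}(\tau(d(\mathcal O^\vee))),
\]
using $\tau=\tau^\vee$ on $W$ and $\tau_*\sgn=\sgn$. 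Injectivity of $\mathsf{Spr}$ on special orbits yields the claim for special $\mathcal O^\vee$.

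To extend to arbitrary orbits, I invoke Lusztig's decomposition of $\mathcal N^\vee$ into special pieces: every nilpotent orbit lies in a unique special piece, and $d$ is constant on each such piece with value $d(\mathcal O^\vee_{sp})$, where $\mathcal O^\vee_{sp}$ is the special orbit of the piece. Since $\tau^\vee$ is an algebra automorphism it preserves the closure ordering, and by the Springer equivariance above it maps special orbits to special orbits, so it permutes special pieces compatibly. The general equality therefore reduces to the special case already proved. The main technical step will be verifying the $\tau$-equivariance of the Springer correspondence: conceptually this is clear, because all the relevant ingredients---the Springer resolution, the $W$-action on the Springer sheaf, and its decomposition into simple summands---can be set up $\tau$-equivariantly, but a careful implementation requires matching the induced $\tau$-action on $W$ coming from the Springer sheaf with the action from the diagram automorphism.
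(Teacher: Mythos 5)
Your proposal is correct and takes a genuinely different route from the paper. The paper proves the proposition by direct case analysis on simple Lie algebras: in type $A$ the outer automorphism fixes every orbit because $A\mapsto -A^t$ preserves Jordan type; in type $E_6$ all weighted Dynkin diagrams are symmetric under the flip, so again $\tau$ acts trivially on orbits; and in type $D_n$ the only nontrivially permuted orbits are the two labels $\CO^I_\lambda, \CO^{II}_\lambda$ of a very even partition $\lambda$, for which the explicit partition formula for $d$ (from \cite[Cor.\ 6.3.5]{CMc}) shows $d$ either fixes or swaps the two labels in a way compatible with $\tau$. The paper then handles semisimple $\fg$ with permuted isomorphic factors by a short direct computation. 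Your argument instead goes through the Springer correspondence: you characterize $d$ on special orbits by $\mathsf{Spr}(d(\CO^\vee))=\mathsf{Spr}(\CO^\vee)\otimes\sgn$, deduce equivariance from the $\tau$-equivariance of $\mathsf{Spr}$ and $\tau_*\sgn=\sgn$, and extend to all orbits using the fact that $d$ is constant on Lusztig's special pieces (which follows from $d^3=d$ and $d^2(\CO)=\CO_{\mathrm{sp}}$). Your approach is uniform and conceptual, avoiding the enumeration of types, and would generalize automatically to new situations; the paper's approach is more elementary and self-contained, relying only on the combinatorics of partitions and weighted diagrams in \cite{CMc}, with no Springer theory required. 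The one point you flag yourself --- the precise $\tau$-equivariance of the Springer correspondence --- is indeed the step that needs care, but it is a standard fact (any automorphism of $\CG$ covering a diagram automorphism acts compatibly on the Springer sheaf and on $W$), so this is not a genuine gap, just a dependency on heavier machinery than the paper chooses to use.
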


\begin{proof}
Assume first that $\mathfrak g$ is a simple Lie algebra. The only cases when $\text{Out}(\mathfrak g)$ is nontrivial are when the Dynkin type is $A_{n-1}$, $D_n$, or $E_6$, in which cases $\text{Out}(\mathfrak g)=\mathbb Z/2.$

If $\mathfrak g=\mathfrak{sl}(n)$, the nontrivial outer automorphism can be realized as $\tau(A)=-A^t.$ If $A$ is a Jordan normal form with partition $\lambda$, it is well known that $-A$ is conjugate to $A$, and the transpose is also conjugate. This means that $\tau$ fixes every nilpotent orbit, so there is nothing to prove.

If $\mathfrak g=E_6$, the nontrivial outer automorphism $\tau$ flips the Dynkin diagram about the branching node. The classification of nilpotent orbits via weighted Dynkin diagrams, see \cite[\S 8.4]{CMc}, shows that all weighted diagrams of type $E_6$ are symmetric with respect to this flip. It follows that again, $\tau$ fixes all of the nilpotent orbits.

If $\mathfrak g=D_n$, the orthogonal Jordan normal form classification says that the nilpotent $O(n)$-orbits are in one-to-one correspondence with partitions $\lambda$ of $2n$, where each even part appears with even multiplicity. Every such orbit $\mathcal O_\lambda$ forms a single $SO(n)$-orbit unless $\lambda$ is a {\it very even partition} \cite[\S 5.3]{CMc}, in which case it splits into two $SO(n)$-orbits labelled $\mathcal O_\lambda^I$
 and $\mathcal O_\lambda^{II}$.
 For $D_n$, the nontrivial outer automorphism $\tau$ acts on the Dynkin diagram by flipping the two extremal nodes connected to the branching node. Using the weighted Dynkin diagram classification \cite[\S 5.3]{CMc}, we see that the only time when $\tau$ does not fix a nilpotent orbit is if $\lambda$ is a very even partition, and in this case $\tau(\mathcal O_\lambda^I)=\mathcal O_\lambda^{II}.$ The description of $d$ on orthogonal partitions \cite[Corollary 6.3.5]{CMc} implies that $d$ maps not very even partitions to not very even partitions, and if $\lambda$ is a very even partition then
 \begin{equation*}
d(\mathcal O_\lambda^{I})=\begin{cases}\mathcal O_\lambda^{I}, &n \text{ even}\\
\mathcal O_\lambda^{II}, &n \text{ odd}\end{cases},
 \end{equation*}
 and similarly for $d(\mathcal O_\lambda^{II})$. This means that $d$ commutes with $\tau$ in this case too.

 Now suppose $\mathfrak g$ is semisimple and it contains two factors isomorphic to a simple algebra $\mathfrak h$ on which $\tau$ acts by
 \[\tau(X,Y)=(\tau_1(Y),\tau_2(X)),\quad \tau_1,\tau_2\in\text{Out}(\mathfrak h).
 \] 
 Similarly $\mathfrak g^\vee$ will have two factors isomorphic to $\mathfrak h^\vee$ and $\tau^\vee$ acts by $\tau^\vee(X^\vee,\mathcal Y^\vee)=(\tau_1^\vee(X^\vee),\tau_2^\vee(Y^\vee)).$ Then
 \[
 d(\tau^\vee(\mathcal O_1^\vee,\mathcal O_2^\vee))=(d\tau_1^\vee(\mathcal O_2^\vee),d\tau_2^\vee(\mathcal O_1^\vee))=(\tau_1 d(\mathcal O_2), \tau_2 d(\mathcal O_1))=\tau d (\mathcal O_1,\mathcal O_2).
 \]
This completes the proof.
    
\end{proof}

\subsection
{\bf Twisted Levi subgroup $\G'$ and $^L\!{\G'}$}

\begin{numbering}\rm 
Let $\bG'$ be an $E$-twisted Levi subgroup of $\bG$ where $E$ is a finite extension of $F$, that is,  $\bG'\otimes_F E$ is a $E$-Levi subgroup of $\bG\otimes_F E$. If $E$ is tamely ramified over $F$, we say that $\G'$ is a tame twisted Levi subgroup of $\G$.
\end{numbering}

\begin{lemma} \cite[Lemma 5.2.8]{Kal}
Let $\bG'\subset\bG$ be a tame twisted Levi subgroup. Let $\wGp\rightarrow\wG$ be the natural inclusion of dual groups, well defined up to $\wG$-conjugacy. There exists an extension of $\wGp\rightarrow\wG$ to an $L$-embedding $\LdualGp\rightarrow\LdualG$.
\end{lemma}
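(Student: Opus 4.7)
The plan is to exhibit the $L$-embedding as a map of the form $g\rtimes w\mapsto g\cdot c(w)\rtimes w$ for a suitable continuous $1$-cochain $c\colon W_F\to\wG$. Writing $\mu_{\bG}\colon W_F\to\mathrm{Aut}(\wG)$ and $\mu_{\bG'}\colon W_F\to\mathrm{Aut}(\wGp)$ for the pinned Galois actions coming from the $F$-structures of $\bG$ and $\bG'$, the condition that this formula define a group homomorphism $\LdualGp\to\LdualG$ extending the given inclusion amounts to two requirements: first, for every $w\in W_F$, the automorphism $\mathrm{Ad}(c(w))\circ\mu_{\bG}(w)$ of $\wG$ must stabilize $\wGp$ and restrict to $\mu_{\bG'}(w)$; second, $c$ must be a $1$-cocycle, $c(w_1w_2)=c(w_1)\cdot{}^{\mu_{\bG}(w_1)}c(w_2)$. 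The goal is therefore to produce such a $c$.

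First I would fix compatible pinnings. Since $\bG'$ is a Levi subgroup of $\bG$ after base change to $E$, it has the same absolute rank as $\bG$; in particular, one can choose a maximal $F$-torus $T\subset\bG'\subset\bG$ common to both. Its dual torus then serves as $\wT'=\wT$. Extend to a Borel $\wB'\subset\wGp$ and a Borel $\wB\subset\wG$ with $\wB'=\wB\cap\wGp$; this is possible because, up to $\wG$-conjugacy, $\wGp$ is the standard Levi of $\wG$ cut out by a subset $J$ of simple roots of $(\wG,\wB,\wT)$. Extending to $W_F$-stable pinnings on both sides, the actions $\mu_{\bG}$ and $\mu_{\bG'}$ automatically agree on $\wT=\wT'$, because both are dual to the Galois action on the cocharacter lattice of $T$. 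The task now reduces to producing, for each $w\in W_F$, an element $c(w)\in\wG$ so that $\mathrm{Ad}(c(w))\circ\mu_{\bG}(w)$ preserves $\wGp$ and induces $\mu_{\bG'}(w)$ on it.

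The cocycle $c$ is then built using the tameness of $E/F$. Because $\bG'\otimes_F E$ is a Levi of $\bG\otimes_F E$, the wild inertia $P_F$ acts trivially on the relevant dual data, so $c$ need only be defined on $W_F/P_F$. The tame inertia quotient is pro-cyclic of order prime to the residue characteristic, and Frobenius acts with a well-understood structure. One constructs $c$ on a topological generator of the tame inertia by lifting the required discrepancy $\mu_{\bG'}(w)\circ(\mu_{\bG}(w)|_{\wGp})^{-1}\in\mathrm{Inn}(\wGp)$ through the chosen pinning of $\wGp$, and then chooses $c$ on a Frobenius lift consistently.

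The principal obstacle is verifying that these pointwise choices assemble into an honest $1$-cocycle globally on $W_F$. This coherence is the technical heart of the lemma, and is the content of \cite[Lemma~5.2.8]{Kal}. Tameness is used decisively: the obstructions to splitting the cocycle live in $H^2$ of finite groups of order prime to the residue characteristic with coefficients in tori or unipotent radicals, and these vanish by standard cohomological arguments. Once $c$ is in place, the formula $g\rtimes w\mapsto g\cdot c(w)\rtimes w$ is the desired $L$-embedding.
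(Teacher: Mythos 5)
The paper cites this lemma from Kaletha \cite{Kal} without reproducing the proof, so there is no in-paper argument to compare against; your sketch must be judged on its own merits. Your framework is the right one — seeking a cochain $c\colon W_F\to\wG$ so that $g\rtimes w\mapsto g\,c(w)\rtimes w$ extends the inclusion $\wGp\hookrightarrow\wG$ — and the reduction to producing a cocycle $c$ satisfying the two conditions you list, together with the choice of compatible pinnings under which the two Galois actions agree on $\wT$, is sound.

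The gap is the assertion that the pointwise lifts assemble into a genuine cocycle "by standard cohomological arguments" because the tame quotient of $W_F$ has order prime to $p$. The discrepancy $\mu_{\bG'}(w)\circ(\mu_{\bG}(w)|_{\wGp})^{-1}$ is inner on $\wGp$, and lifting it through $\wGp\twoheadrightarrow\mathrm{Inn}(\wGp)$ leaves an ambiguity valued in $Z(\wGp)$; the obstruction to coherence therefore sits in $H^2(W_F, Z(\wGp))$. This is cohomology of a profinite group acting through a finite quotient on a complex diagonalizable group, and its vanishing has nothing to do with the residue characteristic: for instance $H^2(\mathbb Z/2,\mathbb C^\times)$ with $\mathbb Z/2$ acting by inversion is nontrivial, and "order prime to $p$" gives no leverage at all since the coefficients are complex, not $p$-adic. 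Kaletha's actual proof avoids invoking any vanishing theorem; he constructs the cocycle $c$ explicitly via Langlands--Shelstad $\chi$-data attached to the Galois orbits of roots of $\wT$ in $\wG$ not lying in $\wGp$. Tameness enters in arranging the $\chi$-data to be tamely ramified and in controlling the relevant field extensions, and the recipe produces a concrete, unobstructed $c$. To repair your sketch you would need to either supply this $\chi$-data construction or give an actual argument that the obstruction class is trivial.
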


\begin{thm} Let $\G$ and $\G'$ be as above. Let 
\[
d':\left({\wGp}\backslash\nil_{\wGp}\right)
\longrightarrow 
\left(\bG'\backslash\nil_{\bG'}\right)
\]
be the duality map defined in Proposition \ref{prop: nil duality}.
\begin{enumerate}
    
\item $d'$ induces
\[
d':\left({\wGp}\backslash\nil_{\wGp}\right)^{W_F}\longrightarrow 
\left(\bG'\backslash\nil_{\bG'}\right)^{W_F}
\]
where $\left({\wGp}\backslash\nil_{\wGp}\right)^{W_F}$ (resp. $\left(\bG'\backslash\nil_{\bG'}\right)^{W_F}$) is the Weil group $W_F$ invariant nilpotent orbits in the Lie algebra of $\wGp$ (resp. $\bG'$).

\item Suppose $char(F)=0$. If $\G'$ is quasi-split, each orbit in $\left(\bG'\backslash\nil_{\bG'}\right)^{W_F}$ contains an element an $F$-rational point. That is, we have 
\[
\orb\cap\lieG'\neq\emptyset \quad\textrm{for}\ \orb\in\left(\bG'\backslash\nil_{\bG'}\right)^{W_F}
\]
\end{enumerate}
\end{thm}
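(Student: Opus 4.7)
The plan is to deduce both parts from how $W_F$ acts on each side, combined with Proposition~\ref{prop: nil duality}. For (1), the $L$-group structure endows $\wGp$ with a $W_F$-action preserving a pinning up to inner automorphism, so the induced action on $\wGp\backslash\nil_{\wGp}$ factors through $\mathrm{Out}(\mathfrak g^{\prime\vee})$. Dually, the $F$-structure on $\bG'$ yields a Galois action on $\bG'\backslash\nil_{\bG'}$ which likewise factors through $\mathrm{Out}(\mathfrak g')$, and under the canonical identification $\mathrm{Out}(\mathfrak g')=\mathrm{Out}(\mathfrak g^{\prime\vee})$ these two outer actions coincide. Proposition~\ref{prop: nil duality} then gives $d'\circ\tau^\vee=\tau\circ d'$ for every outer automorphism $\tau$, so $d'$ intertwines the two $W_F$-actions and therefore restricts to the claimed map on $W_F$-fixed orbits.

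For (2), I would begin by fixing an $F$-rational pinning $(T',B',\{x_\alpha\})$ of $\bG'$, available because $\bG'$ is quasi-split. A Galois-stable orbit $\mathcal O\in(\bG'\backslash\nil_{\bG'})^{W_F}$ then has a Galois-invariant weighted Dynkin diagram $d_\mathcal{O}\colon\Delta\to\{0,1,2\}$ relative to this pinning. In characteristic zero I can choose $h\in\mathfrak t'(F)$ with $\alpha(h)=d_\mathcal{O}(\alpha)$ for every simple root $\alpha$; the induced $\mathrm{ad}(h)$-grading $\mathfrak g'=\bigoplus_{i\in\mathbb Z} \mathfrak g'_i$, the parabolic subgroup with Lie algebra $\bigoplus_{i\ge 0}\mathfrak g'_i$, and its Levi $L'$ with Lie algebra $\mathfrak g'_0$ are then all defined over $F$. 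By Jacobson--Morozov and the Bala--Carter description, $L'$ acts on the $F$-affine space $\mathfrak g'_2$ with a unique open orbit, and this open orbit coincides with $\mathcal O\cap \mathfrak g'_2$; its complement is a proper $F$-Zariski-closed subvariety. Because $F$ is infinite, $\mathfrak g'_2(F)$ contains points outside this complement, each of which is an $F$-rational representative of $\mathcal O$.

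The chief obstacle is arranging the rational setup in (2). Producing $h\in\mathfrak t'(F)$ uses both quasi-splitness (for an $F$-rational maximal torus and simple system compatible with the pinning) and Galois-invariance of $d_\mathcal{O}$, while identifying $\mathcal O\cap \mathfrak g'_2$ with the open $L'$-orbit rests on characteristic zero via Jacobson--Morozov. For $\bG'$ not quasi-split the analogous statement can fail because of a Galois-cohomological obstruction in $H^1(F,Z_{\bG'}(e))$ for $e\in\mathcal O$, which the quasi-split hypothesis precisely eliminates at the level of $\mathfrak g'_2$.
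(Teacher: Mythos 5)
Your proposal is correct, and both parts follow essentially the same route as the paper. For (1), the paper simply observes that Proposition~\ref{prop: nil duality} makes $d'$ equivariant for the outer/Galois actions on both sides and hence maps $W_F$-fixed orbits to $W_F$-fixed orbits; you spell out why the two outer actions match under $\mathrm{Out}(\mathfrak g')=\mathrm{Out}(\mathfrak g^{\prime\vee})$, but the content is the same. For (2), the paper deduces the claim directly from the proof of Kottwitz's theorem (\cite{Kot}, Thm.~4.2) on unipotent conjugacy classes in quasi-split groups in characteristic zero, whereas you reproduce the underlying argument: take an $F$-rational pinning (available by quasi-splitness), note the Galois-invariance of the weighted Dynkin diagram of a $W_F$-stable orbit, build $h\in\mathfrak t'(F)$ and the resulting $F$-rational grading, parabolic and Levi $L'$, identify $\CO\cap\mathfrak g'_2$ with the open $L'$-orbit via Jacobson--Morozov/Bala--Carter, and find an $F$-point by Zariski density over the infinite field $F$. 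This is exactly the mechanism behind the cited theorem, so there is no genuine divergence; if anything, your unwinding makes explicit what the paper leaves to the reference, and your closing remark about the $H^1(F,Z_{\bG'}(e))$ obstruction in the non--quasi-split case is accurate context but not needed for the proof.
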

\proof
(1) By Proposition \ref{prop: nil duality}, $d'$ is equivariant under the Galois action. Hence, $d'$ maps $W_F$-invariants to $W_F$-invariants.

(2) follows from the proof of the following theorem of Kottwitz's.

\begin{thm}\cite[Thm 4.2]{Kot} \ 
If $char(F)=0$ and $\bH$ is quasi-split, then every unipotent conjugacy class of $\bH$ that is defined over $F$ contains an element of $\bH(F)$.
\end{thm}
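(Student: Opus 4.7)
The plan is to reduce the problem to nilpotent orbits via the exponential map and then construct an explicit $F$-rational representative using Jacobson--Morozov--Kostant theory, exploiting an $F$-rational pinning of $\bH$.

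First, since $\mathrm{char}(F)=0$, the exponential map yields an $\bH$-equivariant $F$-isomorphism between the nilpotent cone in $\blieH$ and the unipotent variety in $\bH$. Consequently, it suffices to prove that every $\Gamma$-stable nilpotent $\bH(\bar F)$-orbit $\orb\subset\blieH(\bar F)$ meets $\blieH(F)$, where $\Gamma=\mathrm{Gal}(\bar F/F)$.

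Next, because $\bH$ is quasi-split, fix an $F$-rational pinning $(\bT,\bB,\{X_\alpha\}_{\alpha\in\Delta})$ of $\bH$; the Galois action on the based root datum then factors through $\text{Out}(\bH)$. By the Dynkin--Kostant classification, $\bH(\bar F)$-orbits of nilpotent elements in $\blieH(\bar F)$ are in bijection with weighted Dynkin diagrams $d:\Delta\to\{0,1,2\}$, and $\Gamma$-stability of $\orb$ translates into $\Gamma$-invariance of its diagram. Let $\lambda_d\in X_\ast(\bT)$ be the unique cocharacter with $\langle\alpha,\lambda_d\rangle=d(\alpha)$ for every $\alpha\in\Delta$. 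Because $d$ is $\Gamma$-invariant and the pinning is $F$-rational, $\lambda_d$ is defined over $F$. Decompose $\blieH=\bigoplus_{i\in\mathbb Z}\blieH(i)$ into $\Ad(\lambda_d(\mathbb G_m))$-weight spaces; each $\blieH(i)$ is defined over $F$, and $\bL:=Z_\bH(\lambda_d)$ is an $F$-rational Levi subgroup.

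By Kostant's theorem, there is a unique open $\bL(\bar F)$-orbit $U$ in $\blieH(2)(\bar F)$, and $U=\orb\cap\blieH(2)(\bar F)$. Since $U$ is a non-empty Zariski open $F$-subvariety of the affine $F$-space $\blieH(2)$, its complement is a proper closed $F$-subvariety. Because $F$ is infinite in characteristic zero, the $F$-points $\blieH(2)(F)$ are Zariski-dense and cannot all lie in this proper closed subvariety, hence $U(F)\neq\emptyset$, producing the sought $F$-rational element of $\orb$.

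The principal subtlety lies in the second step: verifying, for general quasi-split $\bH$, that $\Gamma$-stability of $\orb$ really is equivalent to $\Gamma$-invariance of its weighted Dynkin diagram relative to the chosen $F$-pinning. For adjoint or simply connected $\bH$ this is classical; in general one reduces via the central isogenies $\bH_{\mathrm{sc}}\to\bH\to\bH_{\mathrm{ad}}$, which induce bijections on nilpotent orbits and preserve the $\Gamma$-action, so that an $F$-rational nilpotent element constructed in (say) $\bH_{\mathrm{sc}}$ maps down to one in $\bH$.
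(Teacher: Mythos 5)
The paper gives no proof of this statement---it is quoted directly from Kottwitz \cite{Kot}---and your argument is correct and is essentially the standard (indeed Kottwitz's) one: reduce to nilpotent orbits via the exponential, use the $F$-rational pinning of the quasi-split group to identify Galois-stability of the orbit with invariance of its weighted Dynkin diagram, and then produce a rational point of the open Levi-orbit in the degree-$2$ graded piece by Zariski density of $F$-points. The one imprecision is the word ``unique'' for $\lambda_d$: for reductive $\bH$ it is unique only modulo central cocharacters (it can be normalized to lie in the coroot lattice, since the Jacobson--Morozov $\mathfrak{sl}_2$ integrates to the simply connected cover), but central modifications do not change the $\Ad$-grading, so the argument is unaffected.
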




\begin{numbering}\label{num: example GL}{\bf Example. }\rm 
Let $G'=\bGL_m(E)$ be a twisted Levi subgroup of $G=\bGL_n(F)$ where $n=m\cdot[E:F]$. Then,
\[
\bG'(\overline F)=\prod_{i=1}^{[E:F]} \bGL_m(\overline F),
\quad \LdualGp=\left(\prod_{i=1}^{[E:F]} \bGL_m(\bbC)\right)\rtimes W_F
\]
where $W_F$ is the Weil group of $F$. Recall that the nilpotent orbits on $GL_m$ is parameterized by partitions of $m$.
Then the duality map 
\[
d':\bGL_m(\bbC)\backslash\nil_{\bGL_m(\bbC)}
\longrightarrow \bGL_m(E)\backslash\nil_{\bGL_m(E)}
\]
corresponds to the involution $\lambda\mapsto {}^t\lambda$ on partitions of $m$. 
\end{numbering}

\section
{\bf Types and $\sgss$-expansions}\label{sec: Ktype}

In this section, we assume $char(F)=0$.

\subsection{\bf Refined minimal $\sfK$-types}\ 

In \cite{HM} (cf. \cite{BK, BD}), a set of refined minimal $\sfK$-types $(J,\varrho)$ was constructed such that for any $\frs\in\frB(\G_n)$, there corresponds a $(J,\varrho)$ with the property that the
subcategory $\CaR^\frs(\G_n)$ is equivalent to the category $\CaR_\varrho(\G_n)$ of smooth representations of $\G_n$ which are generated by $\varrho$-isotypic components. Here, $J$ is a certain  compact open subgroup of $G_n$ and $\varrho$ is an irreducible smooth $J$-representation. Then one associates the Hecke algebra $\CaH(\varrho)$ to $(J,\varrho)$ as follows:
\[
\CaH(\varrho):=\{f\in\Ccs(\G_n,\End(\tilde\varrho))\mid f(jgj')=\tilde\varrho(j)f(g)\tilde\varrho(j')\}
\]
where $\tilde\varrho$ is the contragredient of $\varrho$.
Then, we have that the category of nondegnerate modules of $\CaH(\varrho)$ is equivalent to $\CaR^\frs(\G_n)$. Moreover, $\CaH(\varrho)$ is isomorphic to the Iwahori Hecke algebra of a twisted Levi subgroup as follows:

\begin{thm}\label{thm: Howe-Moy} (\cite[Theorem 5.6]{HM})
\begin{enumerate}
\item
Every $\pi\in\irrG$ contains a refined minimal $\sfK$-type.
\item 
Let $(J,\varrho)$ be a refined minimal $\sfK$-type. Then there exist extension $E_i/F$ and $n\in\bbN$, $1\le i\le u$, such that $n=\sum_{i=1}^u n_i[E_i:F]$, $J'=J\cap G'$ is an Iwahori subgroup of $\G'=\prod_{i=1}^u\GL_{n_i}(E_i)$, and there exists a Hecke algebra isomorphism
\[
\iota:\CaH':=\CaH(\G'//J')\longrightarrow
\CaH(\varrho):=\CaH(\G//J,\tilde\varrho)
\]
such that
\[
\supp(\iota(f))=J\supp(f)J,\ 
\supp(\iota(f))\cap\G'=\supp(f),\  f\in\CaH'.
\]
Futhermore, $\iota$ is an $L^2$-isometry for the natural $L^2$-structures on $\CaH(\varrho)$ and $\CaH'$.
\end{enumerate}
\end{thm}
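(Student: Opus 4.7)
The plan is to prove the two parts separately: exhaustion by refined minimal $\sfK$-types in (1), and the Hecke algebra isomorphism in (2). Both rely heavily on the Moy--Prasad filtration and a careful analysis of good semisimple elements over tame extensions.

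For part (1), I would begin with the Moy--Prasad exhaustion theorem: for $\pi \in \irr(\G_n)$ of depth $\dth(\pi) = r$, one has $\pi^{\G_{x, r^+}} \neq 0$ for some point $x$ in the Bruhat--Tits building, and the action of $\G_{x,r}/\G_{x,r^+}$ on these invariants produces an unrefined minimal $\sfK$-type via any nontrivial isotypic component. Under the Moy--Prasad isomorphism, the associated character corresponds to a coset in $\lieG^*_{x,-r}/\lieG^*_{x,(-r)^+}$, and refinement consists of choosing a \emph{good} representative in the sense of Howe --- one whose semisimple part generates a tamely ramified extension and is fundamental for the Moy--Prasad filtration. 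For $\GL_n$ this reduces to a lifting problem for semisimple orbits in the reductive quotient of $\G_{x,0}$, which is a product of general linear groups over $\bbF_q$; the lifting follows from elementary linear algebra together with the fact that every semisimple element of $\mathfrak{gl}_n(F^{\mathrm{ur}})$ is conjugate into a standard Levi.

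For part (2), let $\Gamma \in \lieG_n^*$ be the good semisimple element encoded by $(J, \varrho)$, and set $\bG' = C_{\bG_n}(\Gamma)$. Since $\Gamma$ is good and tame, its characteristic polynomial factors as $\prod_i f_i(X)^{n_i}$ with $f_i$ irreducible of degree $[E_i:F]$ over the tame extension generated by the eigenvalues, so $\G' = \prod_i \GL_{n_i}(E_i)$, yielding the stated decomposition $n = \sum_i n_i [E_i : F]$. The subgroup $J' := J \cap \G'$ is an Iwahori subgroup of $\G'$: the point $x$ defining $J$ lies in the apartment of a maximal split torus of $\G'$ via the canonical embedding of buildings for tame twisted Levi subgroups, and the induced Moy--Prasad subgroup of $\G'$ at $x$ is Iwahori because $x$ is chosen as a vertex in each factor building. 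The map $\iota$ is then defined on the natural basis of indicator functions of double cosets $J' g J' \subset \G'$, sending each to the corresponding indicator function of $J g J$ valued in $\End(\tilde\varrho)$ through the action of $\tilde\varrho$ on the two sides.

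The main obstacle is verifying that $\iota$ is an algebra isomorphism, not merely a support-preserving linear bijection. This rests on two facts: (i) the intertwining set of $\varrho$ in $\G_n$ equals $J \G' J$, so only double cosets meeting $\G'$ contribute to $\CaH(\varrho)$; and (ii) for $g_1, g_2 \in \G'$, the Mackey decomposition of the $\G_n$-convolution of the transported functions agrees with the transport of the $\G'$-convolution. Step (i) is the heart of Howe's theory of types: one decomposes $\varrho$ under successive Moy--Prasad subgroups and shows that contributions from $g \notin J \G' J$ involve nontrivial characters of abelian quotients and therefore vanish by orthogonality. Step (ii) then follows by a Haar-measure computation on the intertwining cosets, and the $L^2$-isometry is obtained by normalizing $\mathrm{vol}(J)$ and $\mathrm{vol}(J')$ so matched basis elements have equal norms.
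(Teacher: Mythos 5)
The paper does not give a proof of this theorem: it is quoted verbatim as Theorem 5.6 of Howe--Moy \cite{HM}, so there is no internal argument for you to match. What you have written is a reconstruction of the Howe--Moy proof strategy, recast in Moy--Prasad language (which postdates \cite{HM}; Howe--Moy work directly with lattice chains and hereditary orders rather than building-theoretic filtrations). That recasting is reasonable and roughly faithful in spirit, but two points deserve more care than you give them.

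First, your step (ii) claims the algebra-homomorphism property ``follows by a Haar-measure computation on the intertwining cosets.'' This undersells the real work. A support-preserving bijection on double-coset indicator functions is not automatically multiplicative; one must verify that the structure constants of $\CaH(\varrho)$ match those of the Iwahori--Hecke algebra $\CaH(\G'//J')$, which in practice means checking the quadratic relation for each simple reflection (computing $T_s \ast T_s$ explicitly and extracting the correct parameter $q$) and the braid relations, not merely invoking Mackey theory. Second, and more importantly, you do not address the cocycle obstruction. To build $\iota$ one must extend $\varrho$ from $J$ to each intertwining double coset $JgJ$ with $g \in \G'$ in a way compatible with the group law; a priori this produces a $2$-cocycle on the extended affine Weyl group of $\G'$, and the isomorphism with $\CaH(\G'//J')$ rather than some twisted Hecke algebra only holds because that cocycle is trivial. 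For $\GL_n$ this triviality is a nontrivial computation that constitutes a substantial part of Howe--Moy's argument (and is precisely where the analogous statement becomes delicate for other groups). Your sketch treats the existence of a coherent family of operators $\tilde\varrho(g)$ on the double cosets as though it were automatic, and that is the genuine gap.

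Your part (1) is fine as an outline, modulo the anachronistic vocabulary. The assertion that ``$x$ is chosen as a vertex in each factor building'' should be ``chosen in the alcove of each factor building'' since $J'$ is Iwahori, not maximal parahoric; this is a slip rather than a conceptual error.
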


\begin{defn}\label{def: pure type} \rm\cite[p456]{Mur}
A refined minimal $\sfK$-type $(J,\varrho)$ is {\it pure} if $u=1$.
\end{defn}

\medskip

\begin{remarks}\rm\ 
\begin{enumerate}
\item 
$(J,\varrho)$ is pure if the corresponding $\frs\in\frB$ satisfies that $\supp(\frs) \subset\{\nu^\alpha\rho\mid \alpha\in\bbC\}$ for some supercuspidal representation $\rho\in\CaC_m$. In this case, $\G'=\GL_{\frac nm}(E)$ for a degree $m$-extension $E$ of $F$.
\item 
Since the local Langlands correspondence and the decomposition of Hecke algebras (Theorem \ref{thm: Howe-Moy}) are compatible with parabolic inductions (see \S\ref{subsec: LLC}), it is enough to consider representations containing a pure refined minimal $\sfK$-types.
\end{enumerate}
\end{remarks}

\subsection{\bf $\sgss$-asymptotic expansion}

\begin{defn}\rm
Let $\gss\in\lieG_\sms$ be a semisimple element. Let $\G'=C_\G(\gss)$. Let $\orb(\gss)$ (resp. $\orb'(\gss)$) be the set of $\G$-orbits (resp. $\G'$-orbits) in $\lieG$ (resp. $\lieG'$)which contain $\gss$ in their closure. Note that there is one-to-one correspondence between $\orb'(\gss)$ and the set of nilpotent orbits $\orb'(0)$ in $\lieG'$.
\end{defn}

\begin{defn}\label{def: endscopic wf}\rm\ 
\begin{enumerate}
\item
Let $\gss\in\lieG_\sms$. Let $\eta$ be an exponentiable neighborhood of $0$. For $(\pi,V_\pi)\in\irrG$, we say that $\pi$ {\it is $\gss$-asymptotic on $\eta$} if the following holds for any $f\in\Ccs(\eta)$:
\begin{equation}\label{eq: gss-asymptotic}
\Theta_\pi(f\circ\exp)=\sum_{\orb\in\orb(\gss)}c_\orb\widehat\mu_\orb(f)
\end{equation}
for some $c_\orb\in\bbC$, $\orb\in\orb(\gss)$.
\item Recall $\orb(\gss)$ is partially ordered with respect to the closure inclusion relation. Define \emph{$\gss$-wavefront set} $\WF(\pi,\gss)$ as the set of maximal elements in $\{\orb\in\orb(\gss)\mid c_\orb\neq0\ \textrm{in }(\ref{eq: gss-asymptotic})\}$.
\end{enumerate}
\end{defn}

\begin{remarks}\rm\ Suppose $p$ is sufficiently large as in \cite{De} or \cite{KM}.
\begin{enumerate}
\item Any $(\pi,V_\pi)\in\irrG$ is $0$-asymptotic on $\lieG_{\dth(\pi)^+}$ (\cite{De}).
\item Under some assumptions on $F$, for any $(\pi,V_\pi)\in\irrG$, there is $\gss_\pi\in\lieG_\sms$ such that $\pi$ is $\gss_\pi$-asymptotic on $\lieG_{\frac{\dth(\pi)}2^+}$. Here for $r\in\bbR$, $\lieG_{r^+}=\cup_{s>r}\lieG_s$ where $\lieG_s$ is the union of $\lieG_{x,s}$ with $x$ running over the affine building of $\G$. It is an open and closed $\Ad(\G)$-invariant subset of $\lieG$. Note that if $\lieG_{\frac{\dth(\pi)}2^+}\supset\lieG_{\dth(\pi)+}$ (\cite{KM}). 
{In fact, $\gss_\pi$ depends only on the inertia class $\frs$ (thus $\varrho$). We will also write $\gss_\varrho$, $\gss_\frs$ for $\gss_\pi$ depending on the context.}
\item In (2), let $\G'=C_{\G}(\gss)$. Let $\nil'$ be the set of nilpotent elements in $\lieG'$. 
\end{enumerate}
\end{remarks}

{\begin{remark}\label{rmk: refined ss} \rm In the following, $s_\varrho$ is a semisimple element which is more refined than $\gss_\varrho$ in (2) above. More precisely, $\refs_\varrho$ is of the form $\gss_\varrho+\gss_0$ where $\gss_0\in (C_\lieG(\gss_\varrho))_{0^+}$ such that $C_G(s_\varrho)=\G'$. In \cite{Mur}, $s_\pi$ is used for $\refs_\varrho$.
\end{remark}}

\begin{thm}\label{thm: Murnaghan} (\cite[Thm 14.1]{Mur} and \cite{KM})
Let $(J,\varrho)$ be a pure refined minimal $\sfK$-type.
Suppose $\pi\in\irrG$ contains $\varrho$, that is, $\Hom_J(\varrho,\pi|_J)\neq 0$.
\begin{enumerate}
\item
If $\pi$ is supercuspidal, there exists $\gss_\varrho\in\lieG_{-\dth(\pi)}$ such that $Z_\G(\refs_\varrho)=\G'$ and 
\[
\Theta_\pi(\exp(X))=\dim(\varrho)\int_J\psi(\Tr(\refs_\varrho\Ad k(X)))\, dk
\]
for all $X\in\lieG_{\frac{\dth(\pi)}2^+}$. 
\item
In general, there exist $\refs_\varrho$ and coefficients $c_\orb(\pi)$, $\orb\in\orb(\refs_\varrho)$ such that
\[
\charpi(\exp(X))=\sum_{\orb\in\orb(\refs_\varrho)} c_\orb(\pi)\widehat\mu_\orb(X),\qquad
X\in\lieG_{\frac{\dth(\pi)}2^+}.
\]
\item
Let $\pi'$ be the irreducible unipotent representation of $\G'$ that corresponds to $\pi$ via Hecke algebra isomorphism $\iota$. Let 
\[
\Theta_{\pi'}=\sum_{\orb'\in\orb'(0)}c_{\orb'}\widehat\mu_{\orbp}.
\] 
For each $\orbp\in\orb'(0)$, let $\orb=\G\cdot(\refs_\varrho+\orbp)$. Then, $c_\orb(\pi)\neq0$ if and only if $c_{\orbp}(\pi')\neq0$.
\end{enumerate}
\end{thm}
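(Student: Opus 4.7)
The plan is to establish the three parts in order, with (2) and (3) reducing to part (1) via the Hecke algebra isomorphism of Theorem \ref{thm: Howe-Moy}.

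For part (1), I would follow the classical strategy of computing supercuspidal characters via compact induction. Since $\pi$ is supercuspidal and contains the pure refined minimal $\sfK$-type $(J,\varrho)$, standard type theory (\cite{HM},\cite{BK}) realizes $\pi$ as a representation compactly induced from an irreducible $\widetilde\varrho$ of some compact-mod-center open subgroup $\widetilde J \supseteq J$, where $\widetilde\varrho$ extends $\varrho$ and, on a sufficiently deep filtration layer inside $\widetilde J$, restricts to a multiple of the character $\exp Y \mapsto \psi(\Tr(\refs_\varrho Y))$ by the very construction of refined minimal types. Applying the compact-induction character formula
\[
\Theta_\pi(g) = \sum_{x \in \widetilde J \backslash \G,\ x g x^{-1} \in \widetilde J} \Theta_{\widetilde\varrho}(x g x^{-1})
\]
to $g = \exp X$ with $X \in \lieG_{\frac{\dth(\pi)}2^+}$, a Moy-Prasad depth comparison using $\refs_\varrho \in \lieG_{-\dth(\pi)}$ shows that the contributing cosets parametrize an average over $J$; inserting the Kirillov-type formula for $\Theta_{\widetilde\varrho}$ then produces the stated integral identity.

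For part (2), the strategy is to use the Hecke algebra isomorphism $\iota \colon \CaH' \to \CaH(\varrho)$ from Theorem \ref{thm: Howe-Moy} to reduce to the depth-zero situation on $\G'$. Under the induced categorical equivalence $\CaR^\frs(\G) \simeq \CaR^{\frs_0}(\G')$, with $\frs_0$ the Iwahori component of $\G'$, the representation $\pi$ corresponds to a unipotent representation $\pi'$ of $\G'$. Harish-Chandra's local character expansion for depth-zero representations provides
\[
\Theta_{\pi'}(\exp Y) = \sum_{\orbp \in \orb'(0)} c_{\orbp}(\pi')\, \widehat{\mu}_{\orbp}(Y), \qquad Y \in \lieG'_{0^+}.
\]
Transferring this expansion to $\G$ requires two ingredients. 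First, on $\varrho$-isotypic test functions one has the identity $\Theta_\pi(f) = \Theta_{\pi'}(\iota^{-1}(f))$ (up to normalization), which follows from the equivalence of categories together with the $L^2$-isometry property of $\iota$ asserted in Theorem \ref{thm: Howe-Moy}. Second, Harish-Chandra's semisimple descent identifies, for $\orb = \G \cdot (\refs_\varrho + \orbp)$, the restriction of $\widehat{\mu}_\orb$ to an invariant neighborhood of $\refs_\varrho$ with the distribution $\widehat{\mu}_{\orbp}$ on $\lieG'$ twisted by the character from part (1), up to an explicit Jacobian. Combining these ingredients yields the $\refs_\varrho$-asymptotic expansion.

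Part (3) is then immediate: the descent identity matches $c_\orb(\pi)$ with $c_{\orbp}(\pi')$ up to a nonzero normalizing scalar, giving the stated nonvanishing equivalence. The main technical obstacle is precisely the descent-plus-transfer step of (2): one must show that the Fourier transforms $\widehat{\mu}_\orb$ at orbits in $\orb(\refs_\varrho)$ descend correctly to Fourier transforms of nilpotent orbital integrals on $\lieG'$, with the Jacobian factors from semisimple descent combining compatibly with the character identity produced by $\iota$. This blends Harish-Chandra's integration formula over semisimple conjugacy classes, the homogeneity results of Waldspurger \cite{Wa} and DeBacker \cite{De} for orbital integral germs, and an explicit compatibility of $\iota$ with characters --- which is the core content of \cite{Mur} in the supercuspidal case and of \cite{KM} for general $\pi$ with a pure type.
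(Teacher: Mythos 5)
The paper does not prove this theorem; it is stated as a citation of \cite[Thm 14.1]{Mur} and \cite{KM}, so there is no internal argument to check yours against. Your outline does track the structure of those sources: part (1) is indeed established by realizing a supercuspidal $\pi$ as compactly induced from an open compact-mod-center $\widetilde J\supseteq J$, using that $\widetilde\varrho$ on a sufficiently deep filtration layer is a multiple of $\exp Y\mapsto\psi(\Tr(\refs_\varrho Y))$, and collapsing the Mackey sum to an average over $J$ by a depth estimate; and the coefficient comparison in (3) does fall out of whatever descent identity one proves in (2). You also correctly identify the two hard steps (descent of $\widehat\mu_\orb$ and compatibility of $\iota$ with characters) and the correct references for the homogeneity input.

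Where the write-up is not literally correct is the descent mechanism in part (2). You state that Harish-Chandra descent identifies ``the restriction of $\widehat\mu_\orb$ to an invariant neighborhood of $\refs_\varrho$'' with $\widehat\mu_{\orbp}$ twisted by the character from (1). But the expansion in the theorem evaluates $\widehat\mu_\orb$ at $X\in\lieG_{(\frac{\dth(\pi)}{2})^+}$, i.e.\ near $0$; it makes no sense to restrict $\widehat\mu_\orb$ to a neighborhood of $\refs_\varrho$. What actually localizes near $\refs_\varrho$ is the orbital integral $\mu_\orb$ itself (this is the Harish-Chandra descent step), and passing to the Fourier transform then yields, via Howe's results for $\bfGL_n$ \cite{Ho} and the Weyl integration formula, an identity comparing $\widehat\mu_{\G\cdot(\refs_\varrho+\orbp)}(X)$ for small $X$ with a $\psi(\Tr(\refs_\varrho\,\cdot\,))$-twist of $\widehat\mu_{\orbp}$. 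Likewise, the identity $\Theta_\pi(f)=\Theta_{\pi'}(\iota^{-1}(f))$ holds only after replacing $f$ by $e_\varrho * f * e_\varrho$, and upgrading this to control of $\Theta_\pi$ on all of $C_c^\infty(\lieG_{(\frac{\dth(\pi)}{2})^+})$ so that it can be matched against the span of the $\widehat\mu_\orb$ is precisely the homogeneity step of \cite{De,Wa,KM} that you flag but do not carry out. In short: right ingredients and right order, but the descent step is misstated as a restriction of the Fourier transform rather than of the orbital integral, and the two nontrivial analytic steps remain asserted rather than derived.
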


{\begin{remark}\rm
In \cite{Mur}, a precise relation between coefficients are given
\[
c_\orb(\pi)=\frac{\vol_{G'}(J')}{\vol_G(J)}\cdot\dim(\varrho)\cdot c_{\orbp}(\pi')
\]
Combining with the result of \S\ref{sec: KL}, $c_\orb(\pi)$ is also computable.
\end{remark}
}

\begin{thm}\label{thm: Procter}(\cite{Pr}) Let $(J,\varrho)$ be a pure refined minimal $\sfK$-type.
Let $\pi'$ be the irreducible unipotent representation of $\G'$ that corresponds to $\pi$ via Hecke algebra isomorphism $\iota$ in \ref{thm: Howe-Moy}. Then, the Aubert-Zelevinsky involutions $\AZ(\pi')$ and $\AZ(\pi)$ of $\pi'$ and $\pi$ also correspond to each other via the isomorphism $\iota:\CaH'\longrightarrow\CaH(\varrho)$ in Theorem \ref{thm: Howe-Moy}. 
\end{thm}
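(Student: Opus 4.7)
The plan is to express $\AZ$ on both sides of $\iota$ via Aubert's alternating-sum formula and then invoke the compatibility of $\iota$ with parabolic induction and Jacquet restriction in the sense of Bushnell--Kutzko covers. Recall Aubert's identity in the Grothendieck group of smooth representations of any connected reductive $p$-adic group $H$:
\[
\AZ \;=\; \sum_{P=LN} (-1)^{\dim A_L / A_H}\; i_P^H \circ r_P^H,
\]
where $P$ ranges over standard parabolic subgroups with Levi $L$, $A_L$ is the maximal split central torus of $L$, $i_P^H$ is normalized parabolic induction, and $r_P^H$ is normalized Jacquet restriction. I apply this formula to $\pi \in \irr^{\frs}(G)$ and independently to $\pi' \in \mathsf{Irr}_{J'}(G')$, and then compare the two expansions term-by-term under $\iota$.

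To do so, I first match the relevant parabolics. Since $G' = \bGL_{n/m}(E)$ is a twisted Levi of $G = \bGL_n(F)$, each standard parabolic $P' = M'N'$ of $G'$ corresponds to a parabolic $P = MN$ of $G$ characterized by $G' \subseteq M$ and $M \cap G' = M'$, and this bijection preserves $\dim A_M/A_G = \dim A_{M'}/A_{G'}$, so the signs in the two alternating sums match. Next, by the construction of the Hecke algebra isomorphism in Theorem \ref{thm: Howe-Moy} together with Bushnell--Kutzko cover theory, the pure type $(J,\varrho)$ is a $G$-cover of some Levi type $(J_M,\varrho_M)$ in each such $M$; correspondingly, $\iota$ restricts via $[P]$-positive elements to a Hecke algebra isomorphism $\iota_M : \CaH(M'//J'\cap M') \to \CaH(\varrho_M)$ such that, under the type equivalences $\CaR^{\frs}(G) \simeq \CaH(\varrho)\text{-mod}$ and the analogue for $M$, the Jacquet functor $r_P^G$ corresponds to $r_{P'}^{G'}$ and parabolic induction $i_P^G$ corresponds to $i_{P'}^{G'}$.

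Combining these compatibilities, the term $(-1)^{\dim A_{M'}/A_{G'}} \, i_{P'}^{G'} r_{P'}^{G'}(\pi')$ in the expansion of $\AZ(\pi')$ corresponds under $\iota$ to the term $(-1)^{\dim A_M/A_G} \, i_P^G r_P^G(\pi)$ in the expansion of $\AZ(\pi)$, term by term across the bijection of parabolics. Summing yields $\AZ(\pi) \leftrightarrow \AZ(\pi')$ under $\iota$, as claimed. The main technical obstacle is verifying that the Howe--Moy pure type $(J,\varrho)$ really is a $G$-cover for every Levi $M \supseteq G'$ of $G$, together with the matching of Hecke algebra supports under $\iota$ and the twisted-Levi parabolic bookkeeping (standard parabolics of $G'$, defined over $F$ via the $E$-structure, versus parabolics of $G$ whose Levi meets $G'$ in $M'$). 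For pure types in $\bGL_n$ this is essentially known from Bushnell--Kutzko's theory of simple types and their covers (and the analogous analysis of \cite{HM}), so once this cover property is in hand the alternating-sum argument gives the theorem cleanly.
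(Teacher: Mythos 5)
The paper does not actually prove this statement: it is quoted directly from Procter \cite{Pr}, whose content is precisely the compatibility of the Howe--Moy Hecke algebra isomorphism with parabolic induction and hence with the Aubert--Zelevinsky involution, so your argument---Aubert's alternating-sum formula on both sides, matched term by term via the cover property of the pure type and the resulting correspondence of $i_P\circ r_P$ under $\iota$---is essentially the same route as the cited source rather than a genuinely different one. The step you defer as the ``main technical obstacle'' (that the pure type is a $G$-cover of types on the Levis $\prod_i \GL_{mk_i}(F)$ with $\iota$ intertwining the induction/restriction functors, together with the observation that all other terms in Aubert's sum vanish because the cuspidal support forces every block size to be a multiple of $m$, so the parabolics and signs genuinely biject) is exactly what \cite{Pr}, building on \cite{HM}, establishes; granting that, your outline is correct.
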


In the following theorem, $\Irr_\varrho(G)$ (resp. $\Irr_{I'}(\G')$) denotes the set of irreducible representations of $G$ containing $\varrho$ (resp. irreducible unipotent representations of $G'$), and $\Rep_\varrho(W'_F)$ (resp. $\Rep_{I'}(W'_E)$) is the set of Langlands parameters  corresponding to $\Irr_\varrho(G)$ (resp. $\Irr_{I'}(\G')$) via the local Langlands correspondence. 

\begin{thm}\label{thm: commutative diagram}
The following diagram commutes: 
\[
\begin{array}{ccccccc}
\orb(\G',0)&\ \overset{\WF}\longleftarrow\ &\ \Irr_{I'}(\G')&\ \overset{\iota}\longleftrightarrow\ &\Irr_\varrho(G)\ 
&&\\
\phantom{d'}\uparrow&&\phantom{\AZ}\uparrow&&\uparrow\phantom{\AZ}&&\\
d'\,|&&\AZ\,\,|&&|\,\,\AZ&&\\
\phantom{d'}\,|&&\phantom{\AZ}\downarrow&&\downarrow \phantom{\AZ}&& \\
\Rep_{I'}(W_E')&\ \overset{LLC}\longleftrightarrow\ &\ \Irr_{I'}(\G')&\ \overset\iota\longleftrightarrow\ &\Irr_\varrho(G)\ & &
\end{array}
\]
Here, $d':\Rep_{I'}(W_E')\rightarrow\orb'(0)$ is defined as $d'(\sigma',N')=d'(N')$ where 
$d':\orb^{\prime\vee}\rightarrow\orb'$ is the duality map in (\ref{num: example GL}).
\end{thm}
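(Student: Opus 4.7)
The plan is to decompose the diagram into its right-hand and left-hand squares and treat them separately. The right square connects $\Irr_{I'}(\G')$ to $\Irr_\varrho(\G)$ via the Hecke algebra isomorphism $\iota$ of Theorem \ref{thm: Howe-Moy}, with the vertical arrows being the Aubert--Zelevinsky involutions on the two sides. Its commutativity is precisely the statement of Theorem \ref{thm: Procter}: $\iota$ intertwines $\AZ$ on $\Irr_{I'}(\G')$ with $\AZ$ on $\Irr_\varrho(\G)$, so no further argument is needed here.

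The substantive content lies in the left-hand square, which boils down to the identity
\[
\WF(\pi')\;=\;d'\bigl(\mathbb O^{'\vee}_{\AZ(\pi')}\bigr)\qquad\text{for every } \pi'\in\Irr_{I'}(\G').
\]
Since $(J,\varrho)$ is pure, $\G' = \GL_{n/m}(E)$ is a single general linear group over a finite extension $E/F$, and $\Irr_{I'}(\G')$ consists exactly of the Iwahori-spherical $\G'$-representations. For such $\pi'$, the dual $\AZ(\pi')$ is again Iwahori-spherical. I would then apply Theorem \ref{thm: MW} (equivalently the Kazhdan--Lusztig-theoretic computation in \S\ref{sec: KL}) to the group $\GL_{n/m}(E)$, taking $\AZ(\pi')$ in the role of the input representation: the theorem yields $\WF(\AZ(\AZ(\pi')))=d^\vee\bigl(\mathbb O^{'\vee}_{\AZ(\pi')}\bigr)$, and the involutivity $\AZ^2=\mathrm{id}$ collapses the left side to $\WF(\pi')$.

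The main obstacle, and essentially the only nontrivial verification, is the identification of $d'$ with the Spaltenstein duality that appears in the Moeglin--Waldspurger statement for $\GL_{n/m}(E)$. The map $d'$ is defined through the dual group $\wGp$ (cf.\ Example \ref{num: example GL}), whereas the MW statement is naturally phrased over $\overline E$; matching the two requires Proposition \ref{prop: nil duality}, which ensures that $d'$ is equivariant under outer automorphisms and in particular under the action of $W_F/W_E$, so that $d'$ descends to the $W_E$-invariant (equivalently $E$-rational) orbits parameterizing the nilpotent side for $\GL_{n/m}(E)$. Once this compatibility is spelled out, the two sides of the identity refer to the same orbit in $\orb'(0)$ and the theorem follows. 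In the non-pure case $u\ge 2$, one reduces to the pure case factor-by-factor, since $\AZ$, $\WF$, LLC, and $d'$ all respect the decomposition $\G'=\prod_i \GL_{n_i}(E_i)$.
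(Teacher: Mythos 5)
Your proposal is correct and follows the same basic route as the paper: split the diagram into its two squares, invoke Theorem~\ref{thm: Procter} for the right square and the M\oe glin--Waldspurger identity (Theorem~\ref{thm: MW}, equivalently the Kazhdan--Lusztig computation of \S\ref{sec: KL} and \cite{CMBO}) applied over $E$ for the left. The paper's own proof is a one-liner citing \cite{CMBO}, \cite{MW}, and Theorem~\ref{thm: Procter}, so your version is a faithful unpacking rather than a different argument. Two remarks on the extra care you take. First, the rationality/outer-automorphism step you flag is genuinely the right thing to worry about in general, but for type $A$ it evaporates: as the proof of Proposition~\ref{prop: nil duality} records, the nontrivial outer automorphism of $\mathfrak{sl}_n$ fixes every nilpotent orbit, and both $d'$ of Example~\ref{num: example GL} and the $d^\vee$ in M\oe glin--Waldspurger are literally transposition of partitions, so the identification is immediate. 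Second, the theorem is stated in the context where $(J,\varrho)$ is a pure refined minimal $\mathsf{K}$-type (so $\G'$ is a single $\GL_{n/m}(E)$), which is why the paper does not address $u\ge 2$; your factor-by-factor reduction is harmless but not needed for the statement as posed.
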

\proof The first square commutes by \cite{CMBO} and \cite{MW}. The second square commutes by Theorem \ref{thm: Procter}. 
\qed

\begin{corollary} For any $\pi\in\Irr_\varrho(G)$, let $\pi'\in\irrGp$ be the corresponding unipotent representation of $G'$. Then,
\[
\WF(\pi,\gss)= d'(\CO'_{\AZ(\pi')})
\]
where $\AZ(\pi')$ is the Aubert-Zelevinsky dual of $\pi'$ and $\CO'_{\AZ(\pi')}$ is the $\G^{'\vee}$ is the $\G^{'\vee}$-saturation of the nilpotent part of Langlands parameter of $\AZ(\pi')$.
\end{corollary}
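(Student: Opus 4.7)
The plan is to deduce the corollary as a direct combination of Theorem \ref{thm: Murnaghan}(3), which transfers the $\gss$-asymptotic information of $\pi$ to local character information of $\pi'$ on the twisted Levi $\G'$, with the commutative diagram of Theorem \ref{thm: commutative diagram}, which already packages the main wavefront-set identity of \cite{CMBO,MW} on the unipotent side.

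First I would establish the reduction $\WF(\pi,\gss)=\WF(\pi')$ under the natural identification of the two indexing sets. Concretely, Theorem \ref{thm: Murnaghan}(3) provides a bijection
\[
\orb'(0)\longrightarrow\orb(\refs_\varrho)=\orb(\gss),\qquad \orb'\longmapsto \orb:=\G\cdot(\refs_\varrho+\orb'),
\]
with the property that $c_{\orb}(\pi)\neq 0$ if and only if $c_{\orb'}(\pi')\neq 0$. The key point I need to check is that this bijection is compatible with the closure partial orderings, so that maximal elements correspond to maximal elements. For $\G=GL_n$ this is transparent, since the closure order on both sides is the dominance order on partitions of appropriate sizes, and the identification is the standard one coming from the block structure of the centralizer $C_\G(\refs_\varrho)=\G'$. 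With closures preserved, the set of maximal nonzero-coefficient orbits on one side matches the corresponding set on the other, i.e.\ $\WF(\pi,\gss)$ matches $\WF(\pi')$.

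Next, since $(J,\varrho)$ is a pure type, $\G'=GL_{n/m}(E)$ for a finite extension $E/F$, and the representation $\pi'\in\Irr_{I'}(\G')$ is an irreducible unipotent (Iwahori-spherical) representation of $\G'$. The commutative diagram of Theorem \ref{thm: commutative diagram} gives exactly
\[
\WF(\pi')=d'\bigl(\CO'_{\AZ(\pi')}\bigr),
\]
which is the M{\oe}glin--Waldspurger/Ciubotaru--Mason-Brown--Okada identity applied inside $\G'$ (the top square of the diagram records $\WF$ on unipotent representations, while the bottom square records the Langlands parameter of $\AZ(\pi')$ and its Spaltenstein dual).

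Finally I would chain the two equalities to obtain $\WF(\pi,\gss)=\WF(\pi')=d'(\CO'_{\AZ(\pi')})$, which is the asserted identity. The substantive work has all been done in the cited theorems; the only real obstacle here is the verification that the bijection $\orb'(0)\leftrightarrow\orb(\gss)$ respects closure orders, but in the $GL$ setting this is immediate from the $GL$-block-diagonal structure of $\G'\subset\G$ and does not require any new geometric input.
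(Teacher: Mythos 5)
Your argument is correct and is essentially identical to the paper's (one-line) proof, which simply cites Theorems \ref{thm: Murnaghan} and \ref{thm: commutative diagram}. You spell out the same chain: Theorem \ref{thm: Murnaghan}(3) transfers the $\gss$-asymptotic expansion of $\pi$ to the local character expansion of $\pi'$ via the orbit bijection $\orb'\mapsto\G\cdot(\refs_\varrho+\orb')$, and the commutative diagram of Theorem \ref{thm: commutative diagram} supplies $\WF(\pi')=d'(\CO'_{\AZ(\pi')})$; you also correctly flag, and dispose of, the only implicit point (that the bijection respects the closure order, which is transparent for $\GL_n$ via the partition/dominance-order description).
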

\proof
This follows from Theorems \ref{thm: Murnaghan} and \ref{thm: commutative diagram}.
\qed

\end{document}